\newcommand{\nat}{{\bf N }}
\newcommand{\Znat}{{\bf Z }}
\newcommand{\Qnat}{{\bf Q }}
\newcommand{\leqct}{\mbox{$\leq_{\rm ct}$}}
\newcommand{\eqct}{\mbox{$=_{\rm ct}$}}
\newcommand{\infct}{\mbox{$<_{\rm ct}$}}
\newcommand{\supct}{\mbox{$>_{\rm ct}$}}
\newcommand{\couple}[2]{\mbox{$\langle #1,#2 \rangle$}}
\newcommand{\segment}{\!\upharpoonright \!\!}
\newcommand{\jsl}{1}
\newcommand{\prog}{\{0,1\}^*}
\def\tte{\mathtt{e}} 
\def\ttp{\mathtt{p}}
\theoremstyle{plain}
\newtheorem{theorem}{Theorem}[section]
\newtheorem{definition-theorem}{Definition-Theorem}
\newtheorem{proposition}{Proposition}[section]
\newtheorem{corollary}{Corollary}[section]
\theoremstyle{definition}
\newtheorem{definition}{Definition}[section]
\newtheorem{example}{Example}[section]
\theoremstyle{remark}
\newtheorem{notation}{Notation}[section]
\newtheorem{remark}{Remark}[section]
\title{\large  \sc  
                        Church, Cardinal and Ordinal 
               \\       Representations of Integers
               \\        and Kolmogorov complexity
	       \\\bigskip Denis Richard's 60th Biirthday Conference}
\author{\small\sc Marie Ferbus-Zanda\\
{\footnotesize Universit\'e Paris 7}\\
{\footnotesize 2, pl. Jussieu 75251 Paris Cedex 05}\\
{\footnotesize France}\\
{\footnotesize\tt ferbus@logique.jussieu.fr}
\and {\small\sc Serge Grigorieff}\\
{\footnotesize LIAFA, Universit\'e Paris 7}\\
{\footnotesize 2, pl. Jussieu 75251 Paris Cedex 05}\\
{\footnotesize France}\\
{\footnotesize\tt seg@liafa.jussieu.fr}}
\begin{document}
\date{Mai 2002}
\maketitle
{\footnotesize \textnormal \tableofcontents}
%
\begin{abstract}
%
We consider classical representations of integers:
Church's function iterators, 
cardinal equivalence classes of sets,
ordinal equivalence classes of totally ordered sets.
Since programs do not work on abstract entities and require
formal representations of objects, we effectivize these abstract
notions in order to allow them to be computed by programs. 
To any such effectivized representation is then associated a
notion of Kolmogorov complexity.
We prove that these Kolmogorov complexities form a strict hierarchy
which coincides with that obtained by relativization to jump
oracles and/or allowance of infinite computations.
\end{abstract}

%
\section{Kolmogorov complexities}
%
%
We shall use the following notations.
\begin{notation}\label{not}$\\$
1) Inequality, strict inequality and equality up to a constant
between functions $\nat\to\nat$  are denoted as follows:
\medskip\\
$\begin{array}{rclcr}
f\ \leqct\ g &\Leftrightarrow &\exists c\ \forall n\ 
                                       (f(n)\leq g(n)+c) &&\\
f\ \infct\ g &\Leftrightarrow & 
           f \leqct g\ \wedge\ \forall c\ \exists n\ (f(n) < g(n)-c)
&&\\
f\ \eqct\ g &\Leftrightarrow & \exists c\ \forall n\ 
                                  (|f(n) - g(n)| \leq c)
          &\Leftrightarrow & f \leqct g\ \wedge\ g \leqct f
\end{array}$
\medskip\\
2) $Y^X$ (resp. $X\to Y$) denotes the set of total (resp. partial)
functions from $X$ into $Y$. 
\medskip\\
3) We denote ${\varphi}_{\tt e}$ the partial recursive function 
$\nat\to\nat$ with code ${\tt e}$.
\medskip\\
4) We denote $card(X)$ the number of elements of $X$ in
case $X$ is a finite set.
\end{notation}
%
\subsection{Kolmogorov complexity}
%
\begin{definition}[Kolmogorov, 1965 \cite{kolmo65}]\label{def:K}
$\\$Kolmogorov complexity $K:\nat\to\nat$ is defined as follows:
\begin{itemize}
\item[]  $K(n)$ {\em is the shortest length 
          of a program which halts and outputs $n$}
\end{itemize}
\end{definition}
\noindent 
To make Def. \ref{def:K} meaningful, some points have to be precised:
\begin{enumerate}
\item[(Q1)] {\em Where are programs taken from? In which alphabet?}
        \\ Bigger the alphabet, shorter the programs. 
	We shall therefore fix the alphabet of all programming
	languages to be binary.
        Now, Kolmogorov's invariance theorem insures that there exist
        optimal universal programming languages $U$ such that, 
	for any programming language $V$, the associated 
	complexity functions $K_U, K_V$ satisfy	$K_U \leqct K_V$ 
	(cf. Notations \ref{not}).
	In particular, if $U_1, U_2$ are two optimal universal 
	programming languages then $K_{U_1} \eqct K_{U_2}$.
\item[(Q2)]   {\em What does it mean that a program outputs  
        an integer $n$ ?}\\
        A program can only output a formal object such as a word
	in some alphabet which represents $n$. 
       However, there is again an invariance property relative to the
	usual representations of the output integer $n$ : 
	up to a constant, the same complexity functions are obtained
	when considering unary representation 
	or base $k$ representation for $k\geq 2$
	(cf. \cite{li-vitanyi} or \cite{ferbusgrigo} or \cite{shen}).
\end{enumerate}
The question aroused by $(Q2)$ is the core of this paper. 
We shall reconsider it in \S\ref{sec:rep} and \S\ref{sec:main}.
%
\subsection{Infinite computations and oracles}
%
Chaitin, 1976 \cite{chaitin76}, and Solovay, 1977 \cite{solovay77},
considered Kolmogorov complexity of infinite objects (namely
recursively enumerable sets) produced by infinite computations.
\\ 
Allowing programs leading to possibly infinite computations 
but finite output (i.e. remove the sole halting condition) , 
we get a variant of Kolmogorov complexity for which the results 
mentionned in $(Q1)$ above also apply.
\begin{definition}\label{def:Kvaria}$\\$
Allowing programs with possibly infinite computations, 
Kolmogorov complexity $K^{\infty} : \nat\to\nat$ is defined 
as follows:
\begin{itemize}
\item[]  $K^{\infty}(n)$ {\em is the shortest length 
       of a (possibly non halting) program
       which outputs $n$ in unary representation}
\end{itemize}
\end{definition}
\begin{remark}$\\$
The definition of $K^{\infty}(n)$ is dependent on the unary 
representation of outputs: $(Q2)$ does not apply.
\end{remark}
    
\ifnum 0=\jsl 
========={\bf Hors version Clermont}=========\\
$K^{\infty}$ has greater syntactical complexity than $K$.
\begin{proposition}  \label{prop:syntaxK}
$\\$ 
1) The relation $K(x)=y$ is $\Sigma^0_1\wedge\Pi^0_1$ but
neither $\Sigma^0_1$ nor $\Pi^0_1$.
\medskip\\
2) The relation $K^{\infty}(x)=y$ is $\Sigma^0_2\wedge\Pi^0_2$
but neither $\Sigma^0_2$ nor $\Pi^0_2$.
\end{proposition}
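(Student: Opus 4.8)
The plan is to treat both parts by one scheme: write the graph of the complexity as the conjunction of a positive and a negative size bound, and then rule out each of the two simpler classes by an incompressibility argument.

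\emph{Part 1.} First I would record the two one-sided bounds. The relation $K(x)\le y$ is $\Sigma^0_1$, since it asserts the existence of a program $p$ with $|p|\le y$ together with a halting computation of $p$ producing $x$; dually $K(x)\ge y$, being $\neg(K(x)\le y-1)$, is $\Pi^0_1$, so their conjunction $K(x)=y$ is $\Sigma^0_1\wedge\Pi^0_1$. It is not $\Sigma^0_1$ because $K$ is \emph{total}, and the graph of a total function is $\Sigma^0_1$ exactly when the function is recursive, whereas $K$ is not: were it recursive, $n\mapsto\mu x\,[K(x)\ge n]$ would be recursive yet output, for each $n$, an integer of complexity $\ge n$ describable in $\log_2 n+O(1)$ bits, impossible for large $n$. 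It is not $\Pi^0_1$ either, for if it were, then by closure of $\Pi^0_1$ under bounded quantification $K(x)\le y\equiv\exists z\le y\,[K(x)=z]$ would be $\Pi^0_1$, hence (being also $\Sigma^0_1$) recursive, making $K(x)=\mu y\,[K(x)\le y]$ recursive — again impossible.

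\emph{Part 2.} This is the one-jump relativization of Part 1 to the halting oracle $\emptyset'$, through the identities $\Sigma^{0,\emptyset'}_1=\Sigma^0_2$ and $\Pi^{0,\emptyset'}_1=\Pi^0_2$ and the fact — central to this paper and developed in \S\ref{sec:main} — that infinite computations have precisely the power of $\emptyset'$, so that $K^\infty$ stands to $\emptyset'$-computations as $K$ does to ordinary ones. For the upper bound, ``$p$ outputs $x$'' through its possibly infinite computation is a stabilization statement of the form $\exists t_0\,\forall t\ge t_0(\cdots)$, hence $\Sigma^0_2$; therefore $K^\infty(x)\le y$ is $\Sigma^0_2$ (bounded $\exists p$), $K^\infty(x)\ge y$ is $\Pi^0_2$, and $K^\infty(x)=y$ is $\Sigma^0_2\wedge\Pi^0_2$. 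For the two exclusions I would first establish the analogue of non-recursiveness, namely that $K^\infty$ is not $\emptyset'$-recursive: otherwise $f(n)=\mu x\,[K^\infty(x)\ge n]$ would be $\emptyset'$-recursive, hence (by the limit lemma) computable by an infinite computation, and packaging this as a non-halting program that hardcodes $n$ and outputs $f(n)$ in unary would give $K^\infty(f(n))\le\log_2 n+O(1)$ against $K^\infty(f(n))\ge n$. Granting this, the exclusions mirror Part 1 one level higher: a $\Sigma^0_2$ graph of the total function $K^\infty$ is r.e.\ in $\emptyset'$, forcing $K^\infty$ to be $\emptyset'$-recursive; and a $\Pi^0_2$ graph makes $K^\infty(x)\le y\equiv\exists z\le y\,[K^\infty(x)=z]$ both $\Pi^0_2$ and $\Sigma^0_2$, hence $\Delta^0_2$ and so $\emptyset'$-recursive, again forcing $K^\infty$ to be $\emptyset'$-recursive, contrary to the above.

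The routine part is the quantifier bookkeeping; the real content is the two incompressibility facts driving the lower bounds — the non-recursiveness of $K$ and, above all, the non-$\emptyset'$-recursiveness of $K^\infty$. The latter hinges on the exact match between unbounded computation and the first jump: an infinite computation can settle $\emptyset'$ in the limit and be made to output its result in unary at cost $O(1)$. Pinning $K^\infty(x)\le y$ at level $\Sigma^0_2$, rather than at the lower level one gets for halting computations, is precisely where this correspondence — and the unary-output convention flagged in the Remark — is used in an essential way.
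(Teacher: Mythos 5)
Your Part 1 is correct and complete, and your quantifier analysis for the upper bound of Part 2 coincides with the equivalence displayed in the paper's own proof. But the paper proves no more than that: Point 1 is dismissed as ``well-known'' and the negative half of Point 2 is left as an explicit ``TO DO'' (indeed the whole proposition sits in a conditional block excluded from the compiled text). So the only place where your proposal goes beyond the paper is the negative half of Part 2 --- and that is where it breaks.

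The gap is the step ``$f$ is $\emptyset'$-recursive, hence (by the limit lemma) computable by an infinite computation, and packaging this as a non-halting program that \dots{} outputs $f(n)$ in unary.'' A limit-computable approximation to $f(n)$ may overshoot the true value, and a unary output is monotone: once a $1$ has been written it cannot be retracted, so the approximating computation cannot in general be converted into an infinite computation whose unary output is exactly $f(n)$. If such a simulation were available at constant cost it would yield $K^{\infty}\leqct K'$, contradicting the strictness of $K^{\infty}\supct K'$ in Proposition \ref{prop:Kcomparing} --- the very hierarchy your own argument invokes. Worse, the lemma you are aiming at (``$K^{\infty}$ is not $\emptyset'$-recursive'') appears to be false in the write-only unary-output model, which is the only model compatible with $K^{\infty}\supct K'$ and with $K^{\nat}_{card}\eqct K^{\infty}$: for a monotone output, ``$p$ outputs exactly $x$ ones'' is the conjunction of the $\Sigma^0_1$ statement ``$p$ ever writes $\geq x$ ones'' and the $\Pi^0_1$ statement ``$p$ never writes $\geq x+1$ ones,'' both decidable from $\emptyset'$; running through the finitely many programs of each length, $\emptyset'$ computes $K^{\infty}(x)$ outright, so the graph of $K^{\infty}$ is $\Delta^0_2$, hence both $\Sigma^0_2$ and $\Pi^0_2$. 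Thus the negative half of Part 2 is not merely unproved --- under the intended reading it seems false, which may be why the authors never filled in their ``TO DO.'' Your scheme would be sound for a model with revisable output, but that model collapses $K^{\infty}$ to $K'$ and is ruled out by the rest of the paper (and by the Remark stressing the dependence on unary representation).
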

\begin{proof}
Point 1 is well-known. As for 2, observe that
\begin{eqnarray*}
K^{\infty}(x)=y  & \Leftrightarrow & 
\exists p\ (|p|=y\ \wedge\ \exists t\ \forall t'>t\ output(p,t')=x)
\\
&& \wedge\ \forall q\ (|q|<y\ \Rightarrow \forall t\ \exists t'>t\ 
                                              output(q,t')\neq x)
\end{eqnarray*}
{\bf TO DO.} prove that it is neither $\Sigma^0_2$ nor $\Pi^0_2$.
\end{proof}
===================================\\
\fi
Kolmogorov complexity can also be considered for computability
relative to some oracle.
\begin{definition}\label{def:Krelative}$\\$
Considering partial recursiveness relative to some oracle $A$,
Def. \ref{def:K}, \ref{def:Kvaria} lead to relative Kolmogorov 
complexities $K^A : \nat\to\nat$ and $K^{A,\infty} : \nat\to\nat$.
\\ In case $A=\emptyset^i$ is the $i$-th jump (i.e a 
$\Sigma^0_i$-complete set of integers), we simply write $K^i, K^{i,\infty}$. 
\\ For explicit values $i=1,2,\ldots$ we also write 
$K', K''\ldots, {K'}^{\infty}, {K''}^{\infty}\ldots$.
\end{definition}
Jump oracle $\emptyset'$ (resp. $\emptyset''$,\ldots) allows 
the computation to decide for free (in a single computation step)
any $\Sigma^0_1$ or $\Pi^0_1$ 
(resp. $\Sigma^0_2$ or $\Pi^0_2$,\ldots)
statement about integers.
As expected and is well known, such an oracle 
leads to an extended notion of programs. Which
allows to
\begin{itemize}
\item  compute non recursive sets and functions,
\item  get much shorter programs to compute finite objects
       or recursive sets and functions, namely 
       $K \supct K' \supct K'' \supct \ldots$.
\end{itemize}
Infinite computations also lead to shorter programs but,
as proved by Becher, 2001 \cite{alpha}, they do not help
as much as the jump oracle.
\begin{proposition}[\cite{alpha}]  \label{prop:Kcomparing}
$\\$\centerline{$K \supct K^{\infty} \supct K' 
                   \supct {K'}^{\infty} \supct K'' \supct \ldots$}
\end{proposition}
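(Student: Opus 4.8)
The plan is to read the chain as an alternation of two elementary comparisons, uniformly relativized to the jump oracles. Writing $\emptyset^{(0)}=\emptyset$ and $K^{(0)}=K$, the whole statement reduces to proving, for every $i\geq 0$, that \textbf{(A)} $K^{(i)}\supct K^{(i),\infty}$ and \textbf{(B)} $K^{(i),\infty}\supct K^{(i+1)}$. By Notation~\ref{not}, each $\supct$ splits into a bound $\leqct$ in the appropriate direction together with the strict clause of $\infct$: for every constant $c$ some argument realizes a gap exceeding $c$. Throughout I adopt the natural \emph{enumeration} semantics for $K^{\infty}$: a (possibly non-halting) program writes ones on a write-only output tape and ``outputs $n$'' when it writes exactly $n$ of them; hence ``$p$ writes at least $k$ ones'' is $\Sigma^0_1$ relative to the oracle, and an admissible output is a non-decreasing limit. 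By Kolmogorov invariance (Q1) all these complexities are defined up to $\eqct$, so the argument is independent of the fixed optimal machine.

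The two $\leqct$ directions are direct. For (A), a halting $\emptyset^{(i)}$-computation is a special infinite one, and recoding its output in unary costs a constant, so $K^{(i),\infty}\leqct K^{(i)}$. For (B), given an optimal infinite $\emptyset^{(i)}$-program $p$ for $n$, the event ``$p$ writes $\geq k$ ones'' is $\Sigma^0_1(\emptyset^{(i)})=\Sigma^0_{i+1}$, hence decidable by $\emptyset^{(i+1)}$; the $\emptyset^{(i+1)}$-program that hard-codes $p$ and returns $\sup\{k:\ p\text{ writes}\geq k\text{ ones}\}$ halts (the output is finite) and prints $n$, so $K^{(i+1)}\leqct K^{(i),\infty}$. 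This is exactly where the monotone semantics is used: a single jump reads off the limit only because the count of written ones is non-decreasing.

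Both strict clauses come from one uniform diagonal. Fix $\ell$ and consider the predicate $K^{(i)}(n)\leq\ell$, i.e. $\exists p\,(|p|\leq\ell\ \wedge\ p\text{ halts relative to }\emptyset^{(i)}\text{ with output }n)$, which is $\Sigma^0_{i+1}$; an infinite $\emptyset^{(i)}$-computation can therefore \emph{enumerate} the at most $2^{\ell+1}$ numbers of $K^{(i)}$-complexity $\leq\ell$. Running such an enumeration and keeping the guess $g=$ ``least number not yet enumerated'', one sees that marking more numbers only raises the least unmarked value, so $g$ is non-decreasing and converges to $n_\ell:=$ the least $n$ with $K^{(i)}(n)>\ell$ (every smaller argument is eventually marked, $n_\ell$ never is). As this is an admissible monotone output, $K^{(i),\infty}(n_\ell)$ is $O(\log\ell)$ (only $\ell$ is hard-coded), while $K^{(i)}(n_\ell)>\ell$ by construction; letting $\ell\to\infty$ gives the strict clause of (A). For (B) replace enumeration by decision: the predicate $K^{(i),\infty}(n)\leq\ell$ is a finite disjunction of conditions ``$p$ outputs exactly $n$'' $=\Sigma^0_{i+1}\wedge\Pi^0_{i+1}$, hence $\emptyset^{(i+1)}$-decidable, and scanning $n=0,1,2,\dots$ for the first ``no'' produces the least $n_\ell$ with $K^{(i),\infty}(n_\ell)>\ell$; thus $K^{(i+1)}(n_\ell)$ is $O(\log\ell)$ while $K^{(i),\infty}(n_\ell)>\ell$, giving the strict clause of (B).

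The main obstacle, and the only delicate point, is the semantics of $K^{\infty}$. Everything hinges on the output being a monotone write-only unary count: this makes ``$\geq k$ ones'' a $\Sigma^0_1$ rather than a $\Sigma^0_2$ event, so that (B) costs \emph{exactly one} jump and not two, and it guarantees that the diagonal's guess $g$ is non-decreasing and hence an admissible infinite-computation output. With a non-monotone ``limit'' convention the recovery in (B) would need $\emptyset^{(i+2)}$ and the whole hierarchy would shift. Beyond this, the remaining verifications are routine: the arithmetical levels of the complexity predicates relative to $\emptyset^{(i)}$, and the uniformity in $i$ that lets the alternation of (A) and (B) for $i=0,1,2,\dots$ assemble into the displayed chain.
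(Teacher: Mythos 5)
The paper gives no proof of Proposition~\ref{prop:Kcomparing}: it is quoted from Becher, Daicz and Chaitin \cite{alpha}, so there is no internal argument to compare yours against. That said, your proof is correct and is essentially the standard one. The decomposition into the alternating relativized comparisons (A) $K^{(i)}\supct K^{(i),\infty}$ and (B) $K^{(i),\infty}\supct K^{(i+1)}$ is the right way to read the chain; the two $\leqct$ directions are as you say (a halting computation is a degenerate infinite one, and a single jump reads off the limit of a monotone unary output by querying ``does $p$ ever write $\geq k$ ones'' for $k=1,2,\dots$ until the first no); and the strict clauses follow from your two diagonals, using that $\{n: K^{(i)}(n)\leq\ell\}$ is $\Sigma^0_1(\emptyset^{(i)})$ with fewer than $2^{\ell+1}$ elements (so the least-unenumerated-number guess is an admissible non-decreasing output of an $\emptyset^{(i)}$-infinite computation), respectively that $K^{(i),\infty}(n)\leq\ell$ is $\emptyset^{(i+1)}$-decidable, so the first $n$ violating it is found by a halting $\emptyset^{(i+1)}$-program of length $O(\log\ell)$. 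Your insistence on the write-only monotone unary output convention is not pedantry but is indeed the load-bearing point, and it is the correct reading of Definition~\ref{def:Kvaria}: it matches exactly how the paper uses $K^{\infty}$ and ${K'}^{\infty}$ in \S\ref{sec:main}, where infinite-computation outputs are counts of emitted $1$'s (e.g.\ in the proof of Theorem~\ref{thm:Kcard}). The only cosmetic quibble is that the bound is $2^{\ell+1}-1$ programs of length $\leq\ell$, which changes nothing.
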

\ifnum 0=\jsl 
========={\bf REMARQUE A TRAVAILLER}=========\\
Unaire, binaire, 2-adique, Avizienis: pas pareil\\
On peut cependant passer du binaire a l'unaire:
Kunaire <= Kbinaire\\
===================================\\
such an oracleleads to more abstract programs. \\
Les calculs infinis permettent aussi de calculer de nouveaux 
ensembles et fonctions\\
Preuve de Prop. \ref{prop:Kcomparing}
a faire. Becher prouve ca pour les outputs en binaire
\\===================================
\fi
%
\subsection{Prefix Kolmogorov complexities}
%
Introduced by Levin \cite{levin73} and Chaitin \cite{chaitin75}
(cf. \cite{li-vitanyi}),
prefix complexity $H:\nat\to\nat$ is the analog of Kolmogorov
complexity obtained by restricting programming languages to be 
prefix-free: two distinct programs have to be incomparable 
with respect to the prefix ordering.
Variants $H', H''\ldots, H^{\infty}, {H'}^{\infty}, \ldots$
involving infinite computations and/or relativization
are defined in the obvious way.

As concerns all questions considered in this paper, everything 
goes through with straightforward changes. 
So that we shall deal exclusively with the $K$ complexity and its 
variants $K^{\infty}, K', {K'}^{\infty}, K'',\ldots$.
%
%
%
\section{Representations of integers}\label{sec:rep}
%
%
%
The purpose of the paper is to consider particular representations
of integers and to study their influence on the definition of 
Kolmogorov complexity as pointed in question $(Q2)$
relative to Def. \ref{def:K} above. This will lead to a 
strict hierarchy of Kolmogorov complexities which happens to 
coincide with that obtained in Prop. \ref{prop:Kcomparing}.
%
\subsection{Abstract representations of integers}
%
A representation of integers involves some abstract object $C$
(in practice much more complex than $\nat$ itself) such that
some of its elements characterize the diverse integers through some 
property.
Each representation illuminates some role and/or properties of 
integers.
\ifnum 0=\jsl 
========={\bf REMARQUE A TRAVAILLER}=========\\
Lien representation $<--->$ properties\\
Complexite logique des proprietes en jeu
\\===================================
\fi
\begin{definition}  \label{def:representation}$\\$
A representation of integers is a pair $(C,R)$ where 
$C$ is some (necessarily infinite) set and $R$ is a {\em surjective} 
partial function $R : C \rightarrow \nat$. 
\end{definition}
\begin{remark}$\\$
In practice, $Domain(R)$ will be a strict subset of $C$, in fact a 
very small part of $C$.
\end{remark}
\begin{example}\label{ex:rep1}$\\$
1) The unary representation of integers corresponds to the free 
algebra built up from one generator and one unary function,
namely $0$ and the successor function $x\mapsto x+1$.
\medskip\\
2) The various base $k$ (with $k\geq 2$) representations of integers
also involve term algebras, not necessarily free.
They differ by the sets of digits they use but all are based on the
usual interpretation 
$d_n \ldots d_1 d_0 \mapsto \sum_{i=0,\ldots,n} d_i k^i$ which, 
seen in Horner style:\\
\centerline{
$k(k(\ldots k (k d_n + d_{n-1}) + d_{n-2})\ldots) + d_1) + d_0$}
is a composition of applications 
$S_{d_0} \circ S_{d_1} \circ \ldots \circ S_{d_n}(0)$ where 
$S_d : x \mapsto kx+d$. 
If a representation uses digits $d\in D$ then it corresponds to 
the algebra generated by $0$ and the $S_d$'s where $d\in D$.  
\begin{enumerate}
\item[i.] The $k$-adic representation uses digits $1,2,\ldots,k$
      and corresponds to a free algebra built up from one generator 
      and $k$ unary functions.
\item[ii.] The usual $k$-ary representation uses digits 
      $0,1,\ldots,k-1$ and corresponds to the quotient
      of a free algebra built up from one generator and 
      $k$ unary functions, 
      namely $0$ and the $S_d$'s where $d=0,2,\ldots, k-1$,
      by the relation $S_0(0)=0$.
\item[iii.] Avizienis base $k$ representation uses digits 
      $-k+1,\ldots,-1,0,1,\ldots,k-1$, 
      (it is a much redundant representation used to perform 
      additions without carry propagation) and
      corresponds to the quotient of the free algebra built up from 
      one generator $a$ and $2k-1$ unary functions, 
      namely $0$ and the $S_d$'s where 
      $d=-k+1,\ldots,-1,0,1,\ldots, k-1$,
      by the relations 
      $\forall x\ (S_{-k+i}\circ S_{j+1}(x)=S_i\circ S_j(x))\ $ where
      $-k<j<k-1$ and $0<i<k$.
\end{enumerate}
3) $R:\nat^4\to\nat$ such that $R(x,y,z,t)=x^2+y^2+z^2+t^2$
is a representation based on Lagrange's four squares 
theorem  
\medskip\\
4) $R:Prime^{\leq 7}\to\nat$ such that 
$R(x_1,\ldots ,x_i)=x_1+\ldots+x_i$ (with $i\leq 7$)
is a representation based on Schnirelman's theorem 
(as improved by RamarŽ, 1995) which insures that every number is the 
sum of at most 7 prime numbers.
\end{example}
Besides such number theoretic representations of integers, 
we shall consider classical set theoretic representations
involving higher order objects.

\begin{example}\label{ex:rep2}$\\$
1) {\bf Church's representation.}$\\$
Integers are viewed as function iterators (which are type 2 objects)
$f \mapsto f^{(n)}$ where 
$f^{(0)}=Id$ and $f^{(n+1)} = f^{(n)} \circ f$. 
Thus, $C$ is the class containing all functional sets 
$(X\to X)^{X\to X}$ (cf. Notations \ref{not})
and $R$ is defined on the proper subclass of $C$ constituted
of functionals which are finite iterators on some $X\to X$
and $R(F)=n$ if and only if $F(f)=f^{(n)}$ for all $f:X\to X$.
\medskip\\
1bis) {\bf $\Znat$-Church's representation.}$\\$ 
Negative iterators $f \mapsto f^{(-n)}$ are defined as follows:
\\ \indent {\em i.}  $Domain(f^{(-n)}) = Range(f^{(n)})$
\\ \indent {\em ii.} $f^{(-n)}(x)=y$ if $y$ is the smallest 
                     such that $f^{(n)}(y)=x$ 
		     (relative to some fixed well-order on $X$) 
\\ The definitions of $C$ and $R$ are analog to that in point 1.
\medskip\\
2) {\bf Cardinal representation.}$\\$
Integers are viewed as equivalence classes of sets relative to 
cardinal comparison.
Thus, $C$ is the class of all sets and $R$ is defined on the proper
subclass of $C$ constituted of finite sets and $R(X)$ is the 
cardinal of the set $X$.
\medskip\\
3) {\bf $\Znat$-Cardinal representation.}$\\$
Relative integers are viewed as differences of natural integers 
which are themselves viewed via cardinal representation.
Thus, $C$ is the class of all pairs of sets and $R$ is defined on 
the proper subclass of $C$ constituted of finite sets and 
$R(X,Y)$ is the difference of the cardinals of $X$ and $Y$.
\medskip\\
4) {\bf Ordinal representation.}$\\$
Integers are viewed as equivalence classes of totally ordered sets.
Thus, $C$ is the class of totally ordered sets and $R$ 
is defined on the proper subclass of $C$ constituted of finite 
totally ordered sets and $R(X)$ is the order type of $X$.
\end{example}
\ifnum 0=\jsl 
========={\bf REMARQUES A TRAVAILLER}=========\\
Church et action de semi-groupe sur le semi-groupe des fonctions\\
Lien iterateurs et cardinaux (et ordinaux ???)\\
Tour d'iterateurs et limites de tours\\
{\bf $\Znat$-Cardinal} permet de comparer sans compter.
Non det des choix successifs quand on compare\\
{\bf $\Qnat$-Cardinal representation.} Rational are\\ 
Entiers de gauss\\
1, j et j2\\
Racines n-iemes de l'unite\\
===================================
\fi
%
\subsection{Formal representations of integers}
%
\ifnum 0=\jsl 
========={\bf REMARQUES A TRAVAILLER}=========\\
representation = choisie par rapport aux operations qu'on veut
utiliser (Ex: algo de Stein)\\
``effectivize" abstract sets: 
sets of sets of integers will be r.e. sets interpreted 
      as sets of $\equiv$ equivalence classes of codes of r.e. sets
      where $e\equiv e'\ \Leftrightarrow\ W_e = W_{e'}$,
\\ Rk: A CRITIQUER
{\em Moreover, programs do not deal with extensional
       sets, functions and functionals but with some intensional
       representations of these objects.}
\\===================================\\
\fi
A formal representation of an integer $n$ is a finite object
(in general a word) which describes some characteristic property 
of $n$ or some abstract object which characterizes $n$.
In fact, each particular representation is really a choice made
in order to access special operations or stress special 
properties of integers.\\
The computer science (or recursion theoretic) point of view brings
an objection to the consideration of abstract sets, functions and
functionals as we did in Example \ref{ex:rep2}:
\begin{itemize}
\item {\em We cannot apprehend abstract sets, functions and 
        functionals but solely programs to compute them 
	(if they are computable in some sense).}
\item {\em Moreover, programs dealing with sets, functions and 
       functionals have to go through some intensional 
       representation of these objects in order to be able to 
       compute with such objects.}
\end{itemize}
Thus, to get effectiveness, we turn from set theory to recursion 
theory and ``effectivize" abstract sets: 
\begin{itemize}
\item sets of integers will be recursively enumerable (r.e.), 
      i.e. domains of partial recursive functions,
\item functions on integers will be partial recursive,
\item functionals will be partial recursive in the sense of 
      higher type recursion theory (cf. usual textbooks
      \cite{rogers} or \cite{odifreddi}).
\end{itemize}
Though abstract representations are quite natural and
conceptually simple, their effectivized versions are quite complex:
the sets of programs computing objects in their domains 
involve levels $2$ or $3$ of the arithmetical hierarchy.
In particular, {\em such representations are not all
Turing reducible one to the other}.
In the sequel we shall only consider type $\leq 2$ 
representations. 
In order to get the adequate notion of recursion-theoretic 
representations of integers, we have to review some
higher order recursion concepts.
%
\subsection{Effectivization of sets of type $\leq 2$ objects}
           \label{sub:type2rec}
%
First, we recall the notion of type $2$ recursion that
we shall use.
\begin{definition}[Effective operations]
  \label{def:effectiveOperations}
$\\$Let $X,Y,Z,T$ be type $0$ spaces 
(i.e. $\nat$, $\nat^k$, $\{0,1\}$,\ldots).
We denote $PR(X\to Y)$ the set of partial recursive functions 
$X\to Y$.
\\
An effective operation 
         $F : PR(X\to Y) \to PR(Z\to T)$ 
or       $F : PR(X\to Y) \to Z$
is an operation which can be defined via partial recursive operations 
on the G\"odel numbers of partial recursive functions.
In other words, letting  $U^{PR(X\to Y)}$ denote a partial recursive 
enumeration of $PR(X\to Y)$, there exists $f$ such that 
the following diagram commutes 
         \[ \begin{CD}
             PR(X\to Y) @> F >>PR(Z\to T)\\
             @AU^{PR(X\to Y)}AA @AAU^{PR(Z\to T)}A \\
             \prog @> f >>\prog
            \end{CD} \]
We denote $Eff((X\to Y) \to (T\to Z))$ (resp. $Eff((X\to Y) \to Z))$
the family of effective operations from $PR(X\to Y)$ 
into $PR(Z\to T)$ (resp. into $Z$).  
\end{definition}
Let's recall the following fact.
\begin{theorem}[Myhill \& Shepherdson, 1955]
$\\$Effective operations \\
\centerline{$F:PR(X\mapsto Y) \to Z$ \ 
                 (or $F:PR(X\mapsto Y) \to PR(Z\mapsto T)$)}
are exactly the restrictions of effectively continuous functionals
\\ \centerline{$F: Y^X \to Z$ \ (or $F: Y^X \to T^Z$)}
in the sense of Uspenskii, 1955
(cf. Rogers \cite{rogers}, or Odifreddi \cite{odifreddi}).
\end{theorem}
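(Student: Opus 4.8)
The plan is to prove the two inclusions separately. The genuinely difficult direction is to show that every effective operation extends to a continuous functional whose continuity is itself effective; the converse (that effectively continuous functionals restrict to effective operations) is a direct computation. Throughout, the decisive tool is Kleene's Recursion Theorem.

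First I would treat the easy direction. Suppose $F$ is effectively continuous in Uspenskii's sense, so that the value $F(g)$ is determined by some finite subfunction of the graph of $g$, and such a determining finite subfunction together with the corresponding value can be found effectively. Given a code $e$, one computes $F(\varphi_e)$ by dovetailing the enumeration of the graph of $\varphi_e$, feeding the growing finite approximations into the effective continuity procedure, and outputting the value as soon as a determining finite subfunction is recognized to be part of $\varphi_e$. This procedure is partial recursive uniformly in $e$, which yields the partial recursive $f$ making the diagram of Def.~\ref{def:effectiveOperations} commute; hence $F$ restricts to an effective operation.

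For the hard direction, let $F$ be an effective operation, witnessed by a partial recursive $f$ with $F(\varphi_e)=\varphi_{f(e)}$ for all $e$. The crucial feature forced by commutativity of the diagram is that $f$ is \emph{extensional} on codes: $\varphi_e=\varphi_{e'}$ implies $\varphi_{f(e)}=\varphi_{f(e')}$. I would first establish monotonicity, namely that $\theta\subseteq\psi$ (as partial recursive functions) entails $F(\theta)\subseteq F(\psi)$, and then the key compactness property: whenever $F(\psi)(x)\downarrow=v$ there is a finite $\theta\subseteq\psi$ with $F(\theta)(x)=v$. This is where the Recursion Theorem enters. Assuming compactness fails, I would use the Recursion Theorem to produce a self-referential index $e_0$ for a function $g=\varphi_{e_0}$ that enumerates the graph of $\psi$ stage by stage while simultaneously running the computation $\varphi_{f(e_0)}(x)=F(g)(x)$; as soon as this computation halts, $g$ freezes at the finite restriction $\theta$ of $\psi$ enumerated so far. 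If the computation never halts, then $g=\psi$, so by extensionality $F(g)(x)=F(\psi)(x)=v\downarrow$, contradicting non-halting; if it halts, then $g=\theta$ is finite, so $F(\theta)(x)\downarrow$, contradicting the failure of compactness. Either way we reach a contradiction, so $F$ is continuous.

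Finally, the continuity just obtained is effective precisely because $f$ is partial recursive: the determining finite $\theta$ can be located by dovetailing, so the extending functional meets Uspenskii's effectiveness requirement. The passage to arbitrary type $0$ spaces $X,Y,Z,T$ is routine, and functionals valued in $PR(Z\to T)$ are handled argument by argument, reducing to the case of output in a type $0$ space $Z$ treated above. The main obstacle is the compactness step: the delicate point is arranging the freezing construction so that the finite part of $\psi$ frozen is exactly the part enumerated at the moment $\varphi_{f(e_0)}(x)$ converges, which forces the index $e_0$ to refer to its own image $f(e_0)$ under $f$, and this self-reference is exactly what the Recursion Theorem supplies.
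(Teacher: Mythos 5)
The paper itself offers no proof of this statement: it is ``recalled'' as a classical fact and delegated to Rogers and Odifreddi, so there is no internal argument to compare yours against. What you have written is, in outline, exactly the standard textbook proof from those references --- the easy direction by dovetailing the enumeration of the graph of $\varphi_e$ through the effective modulus of continuity, and the hard direction by the Recursion Theorem ``freezing'' construction exploiting extensionality of the code-level function $f$. So the approach is the right one and consistent with what the paper is implicitly invoking.

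One step of your sketch is thinner than it can afford to be: you announce ``I would first establish monotonicity'' and then never indicate how. Monotonicity ($\theta\subseteq\psi$ implies $F(\theta)\subseteq F(\psi)$, with matching values) is not a formal consequence of extensionality; it needs its own Recursion Theorem construction (an index $e_0$ whose function enumerates $\theta$ and switches to enumerating $\psi$ once $\varphi_{f(e_0)}(x)$ is seen to converge, with a case split on the value returned). Moreover your compactness argument, as written, only concludes $F(\theta)(x)\!\downarrow$ for the frozen finite $\theta$ --- the value it returns is whatever $\varphi_{f(e_0)}(x)$ converged to, and identifying that value with $v=F(\psi)(x)$ is again an application of monotonicity. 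Monotonicity is also what makes the extension $\hat F(g)(x)=v \Leftrightarrow (\exists\,\mbox{finite }\theta\subseteq g)\,F(\theta)(x)=v$ single-valued on all of $Y^X$, which is the functional the theorem asserts to exist. None of this breaks your proof --- it is all standard --- but as it stands the load-bearing lemma is asserted rather than proved.
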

\begin{definition}[Effectivization of higher type sets]  
\label{def:effectivization}
$\\${\em 1)} The effectivization of the type $1$ sets $Y^X$ and
$X\to Y$ is the subset $PR(X\to Y)$ of partial recursive functions.
\\ {\em 2)} The effectivization of the type $2$ set $(T^Z)^{Y^X}$ 
(resp. $Z^{Y^X}$) is the subset of effective operations 
$PR(X\to Y) \to PR(Z\to T)$ (resp. $PR(X\to Y) \to Z$).
\end{definition}
We can now define {\em strongly universal} partial recursive
functions and {\em strongly universal} effective operations.
\begin{definition}[Universal enumerations] \label{def:universal1}
$\\$Let $X,Y,Z,T$ be type $0$ sets and let ${\cal E}$ be 
$X$ or $PR(X \to Y)$ (resp. $Eff((X\to Y)\to (Z\to T))$).
\medskip\\
{\em 1)} A partial recursive function (resp. effective operation)
$U : \prog \rightarrow {\cal E}$ is {\em universal} 
if there is a recursive function 
${\tt comp}:\prog \times \prog \to \prog$ 
such that if we denote $U_{\tte}$ the function such that
$U_{\tte}(\ttp)=U({\tt comp}(\tte,\ttp))$,
then the family $(U_{\tte})_{\tte\in \prog}$ is an enumeration of 
the class of partial recursive functions (resp. effective operations)
from $\prog$ to ${\cal E}$.
\\ {\em Intuition}: Words in $\prog$ are seen as programs.
A partial recursive function (resp. effective operation) 
$\prog\to {\cal E}$ maps a program to 
the object it computes (which lies in ${\cal E}$).
Function $\ttp\mapsto {\tt comp}(\tte,\ttp)$ is therefore
seen as a compiler.
\\ We say that $\tte$ is a G\"odel number for 
$F\in {\cal E}$ if $F=U_{\tte}$.
\medskip\\
{\em 2)} $U$ is {\em strongly universal} if it is universal and  
for each index $\tte$, there is a constant $c(\tte)$ such that 
for all $\ttp \in \prog$, we have \\ \centerline{
$|{\tt comp}(\tte,\ttp)| \leq |\ttp|+c(\tte)$}
\end{definition}
The following theorem is a classical result of recursion theory
which is crucial for the definition of Kolmogorov complexity
in \S\ref{sub:Krep}.
\begin{theorem} \label{thm:universal1}
$\\$There exists a strongly universal partial recursive function 
(resp. effective operation).\\
Moreover, one can suppose that ${\tt comp}$ is the pairing function
$\couple{}{}$ defined by\\
\centerline{$\couple{\epsilon}{\ttp} = 1\ttp$\ , \
$\couple{\tte_1\tte_2 \cdots \tte_n}{\ttp} = 
         0\tte_10\tte_2 \cdots 0\tte_n1\ttp$}
which satisfies the equality \\ \centerline{
$|\couple{\tte}{\ttp}| = |\ttp| + 2\ |\tte|+ 1$}
\end{theorem}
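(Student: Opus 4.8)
The plan is to reduce to the classical enumeration theorem and then re-parametrize it through the self-delimiting pairing $\couple{\cdot}{\cdot}$ so that the length bound needed for strong universality becomes transparent. First I would invoke the standard enumeration theorem of recursion theory: there is a fixed partial recursive function (resp., in the type-$2$ case, a fixed effective operation) together with a recursive coding of pairs such that every partial recursive function (resp. effective operation) $\prog\to{\cal E}$ occurs, for a suitable index $\tte\in\prog$, as the map $\ttp\mapsto V_{\tte}(\ttp)$. Call this enumeration $(V_{\tte})_{\tte\in\prog}$. For ${\cal E}=X$ or ${\cal E}=PR(X\to Y)$ this is textbook; for ${\cal E}=Eff((X\to Y)\to(Z\to T))$ it is the corresponding enumeration theorem of higher-type recursion theory, obtained via Myhill \& Shepherdson together with Uspenskii's effectively continuous functionals (cf. Rogers \cite{rogers} or Odifreddi \cite{odifreddi}).

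Next I would analyse the pairing $\couple{\tte}{\ttp}$, which is plainly recursive. The length identity $|\couple{\tte}{\ttp}|=|\ttp|+2|\tte|+1$ is an immediate count: each of the $|\tte|$ bits of $\tte$ is preceded by a control bit $0$, a single control bit $1$ separates $\tte$ from $\ttp$, and the $|\ttp|$ bits of $\ttp$ are copied verbatim. The key structural property is that $\couple{\cdot}{\cdot}$ is injective with a recursive inverse: reading a word $w$ from the left, each control bit $0$ in odd position announces one further bit of $\tte$, and the first control bit $1$ in odd position terminates $\tte$ and marks the start of $\ttp$. This procedure recovers $(\tte,\ttp)$ on exactly the range of the pairing and detects the non-decodable words outside it, such as $\epsilon$ and $0$.

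I would then define $U:\prog\to{\cal E}$ by: on input $w$, first decode $w=\couple{\tte}{\ttp}$ — leaving $U(w)$ undefined when $w$ is not in the range — and then output $V_{\tte}(\ttp)$. Since the decoding is recursive and $(V_{\tte})$ is carried by a partial recursive function (resp. effective operation), the composite $U$ is again partial recursive (resp. an effective operation). Setting ${\tt comp}=\couple{\cdot}{\cdot}$ gives $U_{\tte}(\ttp)=U(\couple{\tte}{\ttp})=V_{\tte}(\ttp)$, so $(U_{\tte})_{\tte\in\prog}$ coincides with $(V_{\tte})_{\tte\in\prog}$ and hence enumerates all partial recursive functions (resp. effective operations) $\prog\to{\cal E}$; this is universality. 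Strong universality is then immediate from the length identity, since $|{\tt comp}(\tte,\ttp)|=|\ttp|+2|\tte|+1\leq|\ttp|+c(\tte)$ with $c(\tte)=2|\tte|+1$.

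The only step that is not routine is securing the universal object in the effective-operation case, i.e. that effective operations $\prog\to Eff((X\to Y)\to(Z\to T))$ admit a recursive enumeration borne by a single effective operation; everything else is elementary. Once that universal enumeration is available, the re-parametrization through $\couple{\cdot}{\cdot}$ is uniform and identical across all three choices of ${\cal E}$, and the self-delimiting format is precisely what converts the generic recursive coding supplied by the enumeration theorem into the explicit additive length bound required by strong universality.
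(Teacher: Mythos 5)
The paper offers no proof of this theorem at all: it is stated as "a classical result of recursion theory" and used as a black box. Your proposal correctly supplies the standard argument that is being invoked — enumeration theorem (Myhill--Shepherdson in the type-2 case), re-parametrization through the self-delimiting pairing, and the immediate length count giving $c(\tte)=2|\tte|+1$ — so it is consistent with, and fills in, what the paper takes for granted.
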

%
\subsection{Recursion-theoretic representations of integers}
%
Finally, we are in a position to introduce the wanted definition.
\begin{definition}[Recursion-theoretic representations]
                   \label{def:recursion theoreticalrep}
$\\$A recursion theoretical representation of \nat 
(resp. \Znat) is any {\em surjective} function 
   $\rho : C \rightarrow \nat$ (resp. $\rho : C\to \Znat$) 
where $C$ is the effectivization ${\cal E}$ of some 
higher type set.
\end{definition}
We use Example \ref{ex:rep2} to illustrate the effectivization 
processes described in \S\ref{sub:type2rec}.
\begin{example}\label{ex:rep2effectivized}$\\$
1) {\bf Effective Church and $\Znat$-Church representations.}\\
Iterators $It_{n} : PR(\nat\to\nat)\to PR(\nat\to\nat)$
of partial recursive functions are inductively 
defined as follows for $n\in \nat$ : 
\\ \indent {\em i.}   $It_{0}(f)=f$
\\ \indent {\em ii.}  $It_{n+1}(f)=It_{n}(f) \circ f$
\medskip\\
Negative iterators $It_{-n} : PR(\nat\to\nat)\to PR(\nat\to\nat)$
are defined as follows: 
\\ \indent {\em iii.}  $It_{-n}(f)$ has domain $Range(It_{n}(f)$
\\ \indent {\em iv.} $It_{-n}(f)(x)=y$ if $It_{n}(f)(y)=x$ and
$It_{n}(f)(y)$ is the first halting computation among all
computations $It_{n}(f)(0), It_{n}(f)(1), \ldots$.
\medskip\\
We let
$Church^{\nat} : Eff((\nat \to \nat) \to (\nat \to \nat)) \to \nat$
be so that
\\ \indent $Church^{\nat}(F) = n$ 
               if $F=It_{n}$ for some $n\in \nat$ , \ 
               otherwise undefined
\medskip\\
and 
$Church^{\Znat} : Eff((\nat \to \nat) \to (\nat \to \nat)) \to \Znat$
be so that
\\ \indent $Church^{\Znat}(F) = n$ 
               if $F=It_{n}$ for some $n\in \Znat$ , \ 
               otherwise undefined
\medskip\\
2) {\bf Effective cardinal and $\Znat$-cardinal representations.}\\ 
We let $card^{\nat} : PR(\nat \to \nat) \to \nat$
be so that
\\ \indent {\em i.} $card^{\nat}(f)$ is defined if and only if 
              $domain(f)$ is finite
\\ \indent {\em ii.} $card^{\nat}(f)= card(domain(f))$ 
\medskip\\
We let 
$card^{\Znat} : PR(\nat \to \nat)\times PR(\nat \to \nat) \to \Znat$
be so that
\\ \indent {\em iii.} $card^{\Znat}(f,g)$ is defined if and only if 
              $f,g$ have finite domains
\\ \indent {\em iv.} $card^{\Znat}(f,g)=
                      card(domain(f)) - card(domain(g))$
\medskip\\
3) {\bf Effective ordinal representation.}\\
We let $ord^{\nat} : PR({\nat}^{2} \to \nat) \to \nat $
be so that
\\ \indent {\em i.} $ord^{\nat}(f)$ is defined if and only if 
              the quotient order associated to the transitive 
	      closure of $domain(f)$ is finite
\\ \indent {\em ii.} $ord^{\nat}(f)$ is the order type of this
             quotient order.
\end{example}
The following result measures the syntactical complexity of 
the domain and the graph of the functionals 
$Church^{\nat}, Church^{\Znat}, card^{\nat}, card^{\Znat}, ord$
in terms of the associated index sets.
\begin{proposition}[Syntactical complexity of representations]
    \label{prop:syntaxcomplex}
$\\$
1) {\bf Church representations.}$\\$
The set of pairs $(\tte,n)$ such that $n\in \nat$ 
(resp. $n\in\Znat$) and $\tte$ is a G\"odel number for the iteration
functional $It_{n}$ is $\Pi_{2}^0$-complete.
\\ The set of G\"odel numbers of effective functionals 
in the domain of $Church^{\nat}$ (resp. $Church^{\Znat}$) 
is $\Sigma_{3}^0$-complete. 
\medskip\\
2) {\bf Cardinal representations.}$\\$ 
The set of pairs $(\tte,n)$ such that $n\in \nat$ 
(resp. $n\in\Znat$) and $\tte$ is the G\"odel number of a
partial recursive function the domain of which is finite with $n$ 
elements is $\Sigma_{2}^0$-complete.
\\ The set of G\"odel numbers of partial recursive functions
with finite domains is $\Sigma_{2}^0$-complete. 
\medskip\\
3) {\bf Ordinal representation.}$\\$
The set of pairs $(\tte,n)$ such that $n\in \nat$ 
and $e$ is the G\"odel number of a partial recursive function 
$\nat^2\to\nat$ such that the quotient order associated to the 
transitive closure of $domain(f)$ is finite with $n$ elements is
$\Sigma_{3}^0$-complete.
\\ The set of G\"odel numbers of partial recursive functions
such that the quotient order associated to the transitive 
closure of $domain(f)$ is finite is $\Sigma_{3}^0$-complete. 
\end{proposition}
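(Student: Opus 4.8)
Here is how I would attack the three parts, in each case splitting the work into a syntactic upper bound (membership in the stated class) and a matching lower bound (hardness via an $m$-reduction from a canonical complete set). For the upper bounds I would put every defining condition into prenex form over the matrix predicate ``$\varphi_e(x)\downarrow$'' (which is $\Sigma^0_1$) and count quantifier alternations. For Church the decisive remark is that equality of two effective operations, $U_\tte = It_n$, unfolds through the Myhill--Shepherdson index function into $\forall i\,(\varphi_{h(\tte,i)}=\varphi_{\iota(i,n)})$, where $h,\iota$ are recursive and equality of partial recursive functions $\varphi_a=\varphi_b$ is $\Pi^0_2$; since a leading universal number quantifier does not raise $\Pi^0_2$, the relation $U_\tte=It_n$ is $\Pi^0_2$, and the domain of $Church^{\nat}$, being $\exists n\,(U_\tte=It_n)$, is $\Sigma^0_3$. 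For the cardinal block the bound comes from ``$W_e$ finite'' $\equiv \exists N\,\forall x\,(x\in W_e\rightarrow x<N)$, which is $\Sigma^0_2$ (here $W_e=\mathrm{dom}(\varphi_e)$); for the ordinal block, after noting that membership in the transitive closure $D^{\ast}$ of $D=W_e\subseteq\nat^2$ is still $\Sigma^0_1$, ``the quotient order of $D^\ast$ is finite'' reads $\exists N\,(\text{at most }N\text{ classes})$, i.e.\ $\exists N\,\Pi^0_2=\Sigma^0_3$.

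For the lower bounds I would reduce from the standard complete sets $\mathrm{Tot}=\{e:\varphi_e\text{ total}\}$ ($\Pi^0_2$-complete), $\mathrm{Fin}=\{e:W_e\text{ finite}\}$ ($\Sigma^0_2$-complete) and $\mathrm{Cof}=\{e:W_e\text{ cofinite}\}$ ($\Sigma^0_3$-complete). For the Church pair set, given $m$ I build the effective operation $F_m$ with $F_m(f)(x)=f(x)$ if $\varphi_m(y)\downarrow$ for all $y\le x$ and $F_m(f)(x)\uparrow$ otherwise; then $\varphi_m$ is total iff $F_m=It_0$, so $m\mapsto(\tte(m),0)$ reduces $\mathrm{Tot}$. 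For the cardinal finiteness set, given a $\Sigma^0_2$ predicate $\exists u\,\forall v\,R(m,u,v)$ I let the marker $f(m,s)$ be the least $u\le s$ with $\forall v\le s\,R(m,u,v)$ and enumerate one fresh element into $W_{g(m)}$ each time $f(m,\cdot)$ changes value; the marker stabilises iff a true least witness exists, so $W_{g(m)}$ is finite iff $m$ is in the source. For the ordinal finiteness set I reduce $\mathrm{Cof}$: put a self-loop at every integer (so each enters the field as its own class) and, whenever $x$ enters $W_e$, add the two edges linking $x$ to a fixed hub; in the limit the classes are the hub-class together with one singleton per element of $\overline{W_e}$, so the quotient order is finite iff $e\in\mathrm{Cof}$. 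The $\Znat$-variants are handled by the same constructions.

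The point I would scrutinise most carefully is the \emph{exact-count} pair relations, where my own prenex analysis does not reproduce the stated completeness levels. ``At least $k$ classes'' is $\exists\,\vec b\,(\bigwedge_j b_j\in\mathrm{field}\wedge\bigwedge_{j<l}b_j\not\equiv b_l)$, i.e.\ $\exists(\Sigma^0_1\wedge\Pi^0_1)=\Sigma^0_2$, so ``at most $n$ classes'' is $\Pi^0_2$ and ``exactly $n$ classes'' is $\Sigma^0_2\wedge\Pi^0_2\subseteq\Delta^0_3$; the cardinal analogue, where one counts concrete integers rather than equivalence classes, is even $\Sigma^0_1\wedge\Pi^0_1\subseteq\Delta^0_2$. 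These relations therefore cannot be $\Sigma^0_2$- resp.\ $\Sigma^0_3$-complete, so I would state their bounds as $\Sigma^0_{k-1}\wedge\Pi^0_{k-1}$ and locate the genuine $\Sigma^0_k$-completeness of each block in the finiteness set (the Church pair set, by contrast, really is $\Pi^0_2$-complete, being an equality rather than a count).

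The genuine obstacle is the $\Sigma^0_3$-\emph{hardness} of the domain of $Church^{\nat}$. Here the outer $\exists n$ must be encoded as the very iteration count of a single iterator, yet an effective operation equal to $It_{n_0}$ must apply exactly $n_0$ compositions at \emph{every} argument, so the reduction has to force one global count rather than a count drifting with the input. I would build $F_m$ by the Recursion Theorem as a moving target: a non-decreasing guess $n_s$ for the least witness drives the computation, outputs are committed only as the guess appears to settle, and against the successor function (whose iterates are pairwise distinct and total) a guess that never settles leaves $F_m$ non-total, hence unequal to every $It_k$. The delicate part is that the inner witness condition is $\Pi^0_2$ and so cannot be refuted in finite time; managing this cleanly requires a two-level limit-of-guesses construction, and checking that $F_m$, $g$ and the marker are uniformly effective -- so that the reduction maps are recursive -- is what I would secure through the $s$-$m$-$n$ and Recursion Theorems.
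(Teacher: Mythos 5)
The paper states this proposition with no proof at all, so there is nothing to compare your argument against; it has to be judged on its own terms. Most of it holds up: your quantifier counts for the upper bounds are right, and your hardness reductions for the Church pair set (from $\mathrm{Tot}$, via $F_m(f)(x)=f(x)$ iff $\varphi_m\!\upharpoonright\![0,x]$ is total), for the cardinal finiteness set (the moving-marker proof that $\mathrm{Fin}$ is $\Sigma^0_2$-complete), and for the ordinal finiteness set (self-loops plus a hub, reducing $\mathrm{Cof}$) are all sound. Your objection to the exact-count pair relations is also correct and identifies a genuine error in the statement: $card(domain(\varphi_{\tte}))=n$ is $\Sigma^0_1\wedge\Pi^0_1\subseteq\Delta^0_2$ and ``exactly $n$ equivalence classes'' is $\Sigma^0_2\wedge\Pi^0_2\subseteq\Delta^0_3$, so neither can be complete at the level asserted. (One nuance you skip: the $\Znat$-cardinal pair relation involves a \emph{difference} of two cardinalities and therefore genuinely requires finiteness of both domains; it \emph{is} $\Sigma^0_2$-complete, e.g.\ via $n=0$ and the pair $(f,f)$ with $domain(f)=W_e$.)

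The one step you leave unfinished --- the $\Sigma^0_3$-hardness of $domain(Church^{\nat})$ --- is not merely delicate; it is impossible, and for exactly the reason you yourself exploited against the exact-count relations: the existential witness is computable. If $F=It_n$ then $F(Succ)=Succ^{(n)}$, so $n=F(Succ)(0)$. Hence $\tte\in domain(Church^{\nat})$ if and only if $U_{\tte}(Succ)(0)$ converges to some value $v$ \emph{and} $U_{\tte}=It_v$; since the witness $v$ is the output of a single convergent computation, this is a $\Sigma^0_1\wedge\Pi^0_2$ predicate, hence in $\Delta^0_3$, and a $\Sigma^0_3$-complete set cannot $m$-reduce into $\Delta^0_3$ without collapsing the hierarchy. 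So no two-level limit-of-guesses construction can succeed: any candidate iterator whose iteration count fails to settle is already disqualified by its behaviour on the successor function, which reveals the count in finitely many steps whenever the operation is an iterator at all. The correct statement for the Church domain is $\Pi^0_2$-hardness (which your $\mathrm{Tot}$ reduction already delivers, since your $F_m$ is an iterator iff $\varphi_m$ is total) together with membership in $\Sigma^0_1\wedge\Pi^0_2$. In short: where your proof is complete it is correct, and where it is incomplete the fault lies with the proposition, not with you --- but you should turn your suspicion about that last claim into the disproof above rather than continue trying to prove it.
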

%
\subsection{Kolmogorov complexity and representations of integers}
           \label{sub:Krep}
%
The usual definition of Kolmogorov complexity can be extended
to any recursion-theoretic representation of integers.
\begin{definition}[Kolmogorov complexity relative to 
                       a representation] \label{def:KChurchKcard}
Let ${\cal E}$ be the effectivization  of some 
higher type set and let $\rho : {\cal E} \rightarrow \nat$ 
(resp. $\rho : {\cal E} \rightarrow \Znat$)
be a recursion-theoretic representation of integers.
\\
Considering the diagram 
$$\{0,1\}^* \stackrel{U^{\cal E}}{\longrightarrow} 
   {\cal E} \stackrel{\rho}{\longrightarrow} \nat$$
where $U^{\cal E}$ is some strongly universal enumeration of
${\cal E}$, Kolmogorov complexity $K_{\rho} : \nat \to \nat$ 
is defined as
$$K_{\rho}^{\nat}(n)=\min\{|p| \ : \rho(U^{\cal E}(p))=n\}$$
The definition of $K_{\rho} : \Znat \to \nat$ is analog.
\end{definition}
Theorem \ref{thm:universal1} implies an invariance theorem
for strongly universal enumerations. 
Which insures that the above definition does not depend 
(up to a constant) of the particular choice of the strongly 
universal enumeration $U^{\cal E}$ of ${\cal E}$.
\begin{remark}
The domain of $\rho\circ U^{\cal E}$ is not
recursively enumerable in general 
(cf. Prop. \ref{prop:syntaxcomplex}).
\end{remark}
%
%
\section{Main Theorem}\label{sec:main}
%
%
\subsection{Main theorem}
%
Reconsidering the answer to $(Q2)$ given after Def. \ref{def:K}, 
the main theorem of this paper insures the following. 
\begin{itemize}
\item  Kolmogorov complexity is much dependent on the chosen
       higher order effective representations of integers,
\item  The Kolmogorov complexities associated to representations
       of Example \ref{ex:rep2effectivized} constitute a hierarchy
       which coincides with that obtained with infinite computations
       and relativization to the jumps 
       (cf. Prop. \ref{prop:Kcomparing}).
\end{itemize}
Thus, Kolmogorov complexity measures the complexity of 
higher order effective representations and allows to 
classify them.
\begin{theorem}[Main Theorem]\label{thm:main}$\\$
Let 
$K^{\nat}_{Church}, K^{\Znat}_{Church}, K^{\nat}_{card}, 
                                        K^{Z\nat}_{card}, K_{ord}$ 
be the Kolmogorov complexities associated to the effective versions
of the higher order representations described in Example 
\ref{ex:rep2effectivized}. 
Then
$$K^{\nat}_{Church} \eqct K^{\Znat}_{Church}\segment\nat 
                  \eqct K$$
$$K^{\nat}_{card}   \eqct K^{\infty}$$
$$K^{\Znat}_{card}\segment\nat \eqct K'$$
$$K^{\nat}_{ord}  \eqct {K'}^{\infty}$$
So that we have
$$\ K^{\nat}_{Church}\ \eqct\ K^{\Znat}_{Church}\segment\nat\
\supct\ K^{\nat}_{card}\ \supct\ K^{\Znat}_{card}\segment\nat\
\supct\ K_{ord}$$
\end{theorem}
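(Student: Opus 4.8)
The plan is to establish the four \eqct\ equalities one at a time, each by proving two opposite inequalities, and then to read off the strict hierarchy by chaining these equalities with the already-known strict chain $K \supct K^{\infty} \supct K' \supct {K'}^{\infty}$ of Proposition \ref{prop:Kcomparing}. In every case the \leqct\ direction compiles a short program for the relevant standard complexity into a program computing the corresponding effectivized object, while the \geqct\ direction decodes such an object back into a short program for the standard complexity. Since all these translations are uniform, strong universality (Theorem \ref{thm:universal1}) absorbs their cost into an additive constant, and the invariance noted after Definition \ref{def:KChurchKcard} guarantees independence from the particular universal enumeration.

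For the Church representations I would prove $K^{\nat}_{Church} \eqct K$ as follows. For \leqct, from a shortest standard program for $n$ one builds the effective functional $It_{n}$ (on a code for $f$, return a code for $x\mapsto f^{(n)}(x)$), at constant cost. For \geqct, from any code for $It_{n}$ one recovers $n$ by the \emph{halting} computation $It_{n}(s)(0)=n$, where $s$ is the successor function, yielding a standard program for $n$ of the same length up to a constant. The $\Znat$-variant restricted to \nat\ is identical, since on nonnegative $n$ the positive iterators and the extraction trick are unchanged; hence $K^{\Znat}_{Church}\segment\nat \eqct K$.

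For the cardinal representations, $K^{\nat}_{card}\eqct K^{\infty}$ rests on the exact correspondence between enumerating a finite r.e. set of size $n$ and a monotone infinite computation emitting $n$ in unary: each new element enumerated into $domain(f)$ matches one further output symbol, and conversely. For $K^{\Znat}_{card}\segment\nat \eqct K'$, the \geqct\ direction uses that $\emptyset'$ decides the cardinality of a finite r.e. set (the predicate $|W_e|=k$ is $\Sigma^0_1\wedge\Pi^0_1$, hence $\emptyset'$-decidable), so the oracle computes $card(domain(f))-card(domain(g))=n$ and halts. The more interesting \leqct\ direction invokes the Limit Lemma: a halting $\emptyset'$-computation of $n$ yields a recursive approximation converging to $n$ with only finitely many mind-changes, which one realizes by enumerating one element into $domain(f)$ at each up-step and one into $domain(g)$ at each down-step, so both domains stay finite and $card(domain(f))-card(domain(g))=n$. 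Thus differences of cardinals realize precisely the finite non-monotonicity separating $K'$ from $K^{\infty}$.

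The main obstacle is $K^{\nat}_{ord}\eqct {K'}^{\infty}$, where the order-type representation must encode two distinct sources of power at once: enumerating the relation supplies the infinite-computation strength of the $\infty$ superscript, while collapsing elements under transitive closure and quotient supplies the finite non-monotonicity, i.e. the jump $\emptyset'$. For \geqct\ one runs an infinite $\emptyset'$-computation that enumerates $domain(f)$, uses the oracle to settle the $\Sigma^0_1$ equivalences $a\sim b$ of the full transitive closure (maintaining one representative per class), and outputs in unary the current number of quotient classes, which converges to the order type $n$. For \leqct\ one merges the two earlier techniques: approximate the oracle of a ${K'}^{\infty}$-program by the Limit Lemma and drive an enumeration of a linear preorder in which output increments add a new top element while the finitely many oracle corrections (overshoots of the approximated output) are absorbed by enumerating edges that merge adjacent classes. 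The delicate point, where the argument is most technical, is to verify that, because the true output converges and the corrections are finite, the resulting transitive-closure quotient is a finite total order of type \emph{exactly} $n$; the $\Sigma^0_3$-completeness recorded in Proposition \ref{prop:syntaxcomplex} is the structural reason this bookkeeping closes. Finally, substituting the four equalities into Proposition \ref{prop:Kcomparing} yields $K^{\nat}_{Church}\eqct K^{\Znat}_{Church}\segment\nat \supct K^{\nat}_{card}\supct K^{\Znat}_{card}\segment\nat \supct K_{ord}$, as claimed.
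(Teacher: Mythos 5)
Your overall architecture is the paper's: prove the four \eqct-equalities separately, each by two inequalities obtained from uniform translations whose cost is absorbed by strong universality, then chain them through Proposition \ref{prop:Kcomparing}. The Church part and $K^{\nat}_{card}\eqct K^{\infty}$ match the paper essentially verbatim (the paper routes the Church case through the sufficient conditions (*) and (**) of Theorem \ref{thm:prtype1}, but your extraction $It_{n}(Succ)(0)=n$ is exactly its verification of (*)). For $K^{\Znat}_{card}\segment\nat\eqct K'$ you replace the paper's ``harmless overshoot'' emulation by the Limit Lemma; this is legitimate, and arguably cleaner, precisely because a \emph{halting} $\emptyset'$-computation makes only finitely many oracle queries, so its output is $\Delta^0_2$ and your up-steps/down-steps put only finitely many points into each domain. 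Your \geqct\ direction via $\emptyset'$-decidability of $|W_\tte|=k$ is likewise a correct variant of the paper's ``ask the oracle whether some computation will still halt'' loop.

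The genuine gap is in the \leqct\ direction of $K_{ord}\eqct{K'}^{\infty}$, exactly where you locate the difficulty but then dismiss it: you assert ``the finitely many oracle corrections'' and deduce from that finiteness that the quotient stabilizes at $n$ classes. A \emph{non-halting} $\emptyset'$-computation may query the oracle infinitely often, so the approximation of its oracle answers can be corrected infinitely many times, and the number of $1$'s emitted by the stage-$s$ approximation of the run need not converge to $n$. Indeed, the total output of an infinite $\emptyset'$-computation is only $\Sigma^0_2\wedge\Pi^0_2$, not $\Delta^0_2$ --- this is the very reason ${K'}^{\infty}$ sits strictly below $K'$ in Proposition \ref{prop:Kcomparing} --- so the Limit Lemma is simply not available here. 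Consequently your merging rule must be shown to absorb \emph{infinitely many} spurious top elements while leaving exactly $n$ surviving classes; as stated nothing rules out infinitely many never-merged spurious classes, nor the accidental collapse of genuine ones. The paper's own construction (answer ``NO'' to every query, verify, and on each detected error converge on all of $X^2$ and restart from $X=\{0\}$) is its way of forcing all invalidated points into a single absorbing class, and even there the verification that the limit quotient has exactly $n$ points is only sketched. That bookkeeping --- organizing corrections so that every point created during an eventually-invalidated portion of the emulation ends up identified with a point of the final chain --- is the step your proposal leaves unproved.
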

%
\subsection{Representations $\rho$ such that $K_{\rho} = K$}
%
The following theorem gives simple sufficient conditions on
representations $\rho$ so that the associated Kolmogorov complexity 
$K_{\rho}$ be equal, up to a constant, 
to usual Kolmogorov complexity $K$.\\
In particular, these conditions will apply to Church's 
representations.
\begin{theorem}  \label{thm:prtype1}
$\\$Let ${\cal E}$ be $PR(X\to Y)$ or $Eff((X\to Y)\to (Z\to T)$ 
and $U^{\cal E} : \prog \to {\cal E}$ be strongly universal.
Let $\rho : {\cal E} \to \nat$ (resp. $\rho : {\cal E} \to \Znat$)
be a representation of integers.
\\ Consider the following conditions:
\begin{itemize}
\item[(*)] $\rho \circ U^{\cal E}$ is the restriction to its domain
           of some partial recursive function $f : \prog \to \nat$
	   ($f : \prog \to \Znat$). 
\item[(**)] $\rho$ is {\em effectively surjective}:
            there exists a {\em total recursive} function 
            $g : \nat \to \prog$ (resp. $g : \Znat \to \prog$)
	    such that 
             $\rho(U^{\cal E}(g(n)))=n$ for all $n\in\nat$
	     (resp.  $n\in\nat$)
\end{itemize}
1) Condition (*) implies $K \leqct K_{\rho}$.
\medskip\\ 
2) Condition (**) implies $K_{\rho} \leqct K$.
\end{theorem}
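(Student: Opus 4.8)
The plan is to treat the two implications separately, in each case reducing $K_\rho$ to an ordinary Kolmogorov complexity attached to a genuine partial recursive ``machine'' and then invoking the invariance theorem recalled in $(Q1)$.

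For part 1, I would first unwind $(*)$: it furnishes a partial recursive $f:\prog\to\nat$ agreeing with $\rho\circ U^{\cal E}$ on the latter's domain. Viewing $f$ itself as a programming language, set $K_f(n)=\min\{|p| : f(p)=n\}$. Since the optimal universal machine defining $K$ satisfies $K\leqct K_V$ for every partial recursive machine $V$ (the invariance theorem of $(Q1)$), in particular $K\leqct K_f$. On the other hand, whenever $\rho(U^{\cal E}(p))=n$ the word $p$ lies in the domain of $\rho\circ U^{\cal E}$, so $f(p)=n$; hence $\{p : \rho(U^{\cal E}(p))=n\}\subseteq\{p : f(p)=n\}$ and therefore $K_f(n)\le K_\rho(n)$ for every $n$ (no constant needed). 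Chaining the two facts gives $K\leqct K_f$ together with $K_f\leq K_\rho$ pointwise, i.e. $K\leqct K_\rho$.

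For part 2, the idea is to compose the standard optimal decoder with the effective surjection $g$ from $(**)$ and then read off a short $U^{\cal E}$-program via the compiler of Theorem \ref{thm:universal1}. Concretely, let $U_K$ be the optimal machine defining $K$ and consider $\psi = U^{\cal E}\circ g\circ U_K : \prog\to{\cal E}$. Because $g\circ U_K$ is partial recursive and $U^{\cal E}$ is a (strongly) universal enumeration, $\psi$ is again a partial recursive function (resp. effective operation) $\prog\to{\cal E}$, so universality provides a G\"odel number $\tte_0$ with $\psi(q)=U^{\cal E}({\tt comp}(\tte_0,q))$ for all $q$. Now fix $n$ and an optimal program $q$ for it, so that $U_K(q)=n$ and $|q|=K(n)$. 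Then $\psi(q)=U^{\cal E}(g(n))$, and by $(**)$ we have $\rho(U^{\cal E}(g(n)))=n$; consequently $\rho(U^{\cal E}({\tt comp}(\tte_0,q)))=n$, so ${\tt comp}(\tte_0,q)$ is a $K_\rho$-witness for $n$.

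The one point requiring care --- and the reason strong universality is indispensable --- is the length bound, which I expect to be the main obstacle. The naive candidate program $g(n)$ cannot be used directly, since $g$ is merely total recursive and $|g(n)|$ need not be controlled by $K(n)$. Passing instead through $\psi$ and its compiler turns this into the strong-universality estimate $|{\tt comp}(\tte_0,q)|\le|q|+c(\tte_0)$ of Definition \ref{def:universal1} (realized by the pairing function of Theorem \ref{thm:universal1}), whence $K_\rho(n)\le K(n)+c(\tte_0)$ with $c(\tte_0)$ independent of $n$, i.e. $K_\rho\leqct K$. The relative-integer versions are identical, replacing $\nat$ by $\Znat$ throughout and taking $K$ to be Kolmogorov complexity on $\Znat$.
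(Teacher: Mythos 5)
Your proposal is correct and follows essentially the same route as the paper: part 1 treats $f$ as a programming language and chains $K \leqct K_f$ with the pointwise bound $K_f(n)\le K_\rho(n)$, and part 2 composes $g$ with the optimal decoder, takes an index for the resulting map into ${\cal E}$, and applies the strong-universality length bound. Your extra remark on why $g(n)$ alone cannot serve as the witness is a sound clarification of the same argument, not a deviation from it.
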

\begin{proof}$\\$
1) Let $n\in\nat$ and let $p\in\prog$ be a minimal length program
such that $\rho(U^{\cal E}(p)) = n$. Then $K_{\rho}(n) = |p|$. 
Condition (*) implies $f(p) = n$. Therefore, viewing $f$ as a 
programming language and using Kolmogorov's invariance theorem 
there is a constant $c$ independent of $n$ such that\\
\centerline{$K(n)\leq K_f(n) + c \leq |p|+c = K_{\rho}(n)+c$}
Thus, $K \leqct K_{\rho}$.
\medskip\\
2) Let $U^{\nat}:\prog\to\nat$ be a strongly universal enumeration
of $\nat$. The strong universality of $U^{\cal E}$ insures that 
there exists ${\tt e}$ such that 
$$\forall p\ \ U^{\cal E}(g(U^{\nat}(p))) 
            = U^{\cal E}(\langle {\tt e}, p\rangle)$$
Let $n\in\nat$ and let $p\in\prog$ be a minimal length program
such that $U^{\nat}(p) = n$. Then $K(n) = |p|$. 
\\ Condition (**) implies 
           $\rho(U^{\cal E}(g(U^{\nat}(p)))) = n$.
Thus, $\rho(U^{\cal E}(\langle {\tt e}, p\rangle)) = n$, 
whence (using Thm \ref{thm:universal1})
    $$K_{\rho}(n) \leq |\langle {\tt e}, p\rangle|
                   = |p|+2|{\tt e}|+1 = K(n)+2|{\tt e}|+1$$ 
and therefore $K_{\rho} \leqct K$.
\end{proof}
\begin{corollary}$\\$
\centerline{$K_{Church}^{\nat} 
         \eqct K_{Church}^{\Znat}\segment\nat \eqct K$}
\end{corollary}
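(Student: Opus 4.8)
The plan is to derive both equalities from Theorem~\ref{thm:prtype1} applied to $\rho = Church^{\nat}$, by checking its two hypotheses (*) and (**), and then to reduce the $\Znat$-case to the $\nat$-case through a direct comparison of the underlying sets of programs. Throughout, $U^{\cal E}$ denotes a fixed strongly universal enumeration of ${\cal E} = Eff((\nat\to\nat)\to(\nat\to\nat))$, and I freely use the index-level partial recursive function that the commuting diagram of Definition~\ref{def:effectiveOperations} attaches to each effective operation.

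First I would verify condition (**), which yields $K_{Church}^{\nat} \leqct K$. Given $n\in\nat$, the iteration functional $It_n$ is obtained uniformly: one writes, effectively in $n$, the index of the operation sending a code of $f$ to a code of the iterate $It_n(f)$ defined by clauses (i)--(ii) of Example~\ref{ex:rep2effectivized}. This produces a total recursive $g:\nat\to\prog$ with $Church^{\nat}(U^{\cal E}(g(n))) = n$, so $\rho$ is effectively surjective.

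The more delicate step is condition (*), giving $K \leqct K_{Church}^{\nat}$. The issue is that the domain of $Church^{\nat}\circ U^{\cal E}$ is far from recursive (indeed $\Sigma_3^0$-complete by Prop.~\ref{prop:syntaxcomplex}), so I cannot decide it; but (*) only demands a partial recursive $f$ that \emph{agrees} with $Church^{\nat}\circ U^{\cal E}$ on that domain. I would define $f(p)$ by: apply the effective operation $U^{\cal E}(p)$ to a fixed index of the successor function $s:x\mapsto x+1$, obtaining an index of $U^{\cal E}(p)(s)$, then evaluate that function at $0$ and subtract the bookkeeping constant forced by the convention $It_0(f)=f$. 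Composing the universal enumeration with this application and with evaluation shows $f$ is partial recursive. When $U^{\cal E}(p)=It_n$ one has $It_n(s):x\mapsto x+(n{+}1)$, hence $f(p)=n$; so $f$ extends $Church^{\nat}\circ U^{\cal E}$ and (*) holds. Theorem~\ref{thm:prtype1} then gives $K_{Church}^{\nat}\eqct K$.

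Finally I would treat $K_{Church}^{\Znat}\segment\nat$. The point is that for a \emph{non-negative} target $n\in\nat$, the only functional $F$ with $Church^{\Znat}(F)=n$ is the positive iterator $It_n$, exactly as for $Church^{\nat}$, since the negative iterators of clauses (iii)--(iv) all map to strictly negative integers. Consequently the set $\{p : Church^{\Znat}(U^{\cal E}(p))=n\}$ coincides with $\{p : Church^{\nat}(U^{\cal E}(p))=n\}$, so the two minima defining $K_{Church}^{\Znat}(n)$ and $K_{Church}^{\nat}(n)$ are taken over identical sets. Thus $K_{Church}^{\Znat}\segment\nat = K_{Church}^{\nat}$ on all of $\nat$, and in particular $K_{Church}^{\Znat}\segment\nat \eqct K$, completing the corollary. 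The one genuine obstacle is the correctness argument for (*): I must argue that the ``evaluate the iterated successor at $0$'' device recovers $n$ on the whole (non-recursive) domain, using solely the defining equations of $It_n$ rather than any decision procedure for membership in that domain.
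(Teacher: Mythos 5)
Your proof is correct and takes essentially the same route as the paper: both derive the result from Theorem~\ref{thm:prtype1}, checking (**) by uniformly producing a program for $It_n$ and (*) via the extension $f(\ttp)=U^{\cal E}(\ttp)(Succ)(0)$ of $Church^{\nat}\circ U^{\cal E}$ (you are in fact more careful than the paper about the off-by-one constant forced by the convention $It_0(f)=f$ in Example~\ref{ex:rep2effectivized}, which makes $It_n(Succ)(0)=n+1$). The only divergence is the $\Znat$ case: the paper invokes ``straightforward modifications'' of the same argument, whereas you observe that for $n\in\nat$ the sets $\{\ttp : Church^{\Znat}(U^{\cal E}(\ttp))=n\}$ and $\{\ttp : Church^{\nat}(U^{\cal E}(\ttp))=n\}$ coincide outright, so the two minima agree exactly --- a cleaner disposal of that sub-case, since the paper's witness $f$ actually diverges on the negative iterators and would need a genuine (dovetailing) modification to handle all of $\Znat$.
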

\begin{proof}$\\$
We show that conditions (*) and (**) of Theorem \ref{thm:prtype1}
are satisfied for the $\rho$ associated to $K_{Church}^{\nat}$.
The argument also applies with straightforward modifications to
$K_{Church}^{\Znat}\segment\nat$.\\
To get condition (**), just design a program for functional $It_n$.
\\
As for condition (*), let 
${\cal E}={Eff((\nat \to \nat) \to (\nat \to \nat))}$ and
$Succ:\nat\to\nat$ be the successor function and 
define $f\prog\to\nat$ as follows: 
           $$f(p) = U^{\cal E}(p)(Succ)(0)$$     
If $\rho(U^{\cal E}(p))=n$ then $U^{\cal E}(p) = It_n$ so that
$U^{\cal E}(p)(Succ)$ is the function $x\mapsto x+n$ and
$f(p)=U^{\cal E}(p)(Succ)(0)=n$.
\end{proof}
\begin{remark}
$\\$1) Conditions (*) and (**) both hold trivially in case
$\rho \circ U^{\cal E}$ is a partial recursive function 
or an effective functional.
\medskip\\
2) Theorem \ref{thm:prtype1} relativizes to computability
with an oracle.
\end{remark}
%
\subsection{$K^{\nat}_{card}$ and $K^{\infty}$}
%
\begin{theorem} \label{thm:Kcard}$\\$
\centerline{$K_{card}^{\nat} \eqct K^{\infty}$}
\end{theorem}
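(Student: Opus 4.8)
The plan is to prove $K_{card}^{\nat} \eqct K^{\infty}$ by establishing the two inequalities $K^{\infty} \leqct K_{card}^{\nat}$ and $K_{card}^{\nat} \leqct K^{\infty}$ separately, in the style of the proof of Theorem \ref{thm:prtype1}. The whole argument rests on one semantic observation: an infinite computation that outputs $n$ in unary is nothing but an \emph{increasing enumeration} of $n$ tally marks, so its output count at time $t$ is non-decreasing and its limit equals its supremum $n$; symmetrically, the domain of a partial recursive function is enumerated monotonically, and when it is finite its cardinality $n$ is the supremum of the partial counts obtained by dovetailing. Thus both complexities measure, up to this monotone bookkeeping, the length of a device producing ``exactly $n$ in the limit from below'', and the two inequalities are obtained by translating one such device into the other with an $O(1)$ overhead coming from strong universality (Theorem \ref{thm:universal1}).

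For $K^{\infty} \leqct K_{card}^{\nat}$ I would take a $K_{card}^{\nat}$-minimal program $p$, so that $U^{\cal E}(p) = f$ is partial recursive with $card(domain(f)) = n$ and $|p| = K_{card}^{\nat}(n)$. Feed $p$ to the fixed infinite-computation machine $M$ which simulates $f = U^{\cal E}(p)$, dovetails its computations on all inputs $0,1,2,\dots$, and keeps on its output tape the unary tally of the inputs found so far to lie in $domain(f)$. Since $domain(f)$ has exactly $n$ elements this tally is non-decreasing and converges to $n$, so $M$ run on $p$ outputs $n$ in unary. As $M$ is fixed, universality of the infinite-computation system yields a program for $n$ of length $|p| + O(1)$, whence $K^{\infty}(n) \leq K_{card}^{\nat}(n) + O(1)$.

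For the reverse inequality $K_{card}^{\nat} \leqct K^{\infty}$ I would take a $K^{\infty}$-minimal program $q$, whose unary output count $c_t$ is non-decreasing and converges to $n$, with $|q| = K^{\infty}(n)$. From $q$ define the partial recursive function $f_q$ by letting $f_q(k)$ simulate $q$ and halt as soon as its output count satisfies $c_t \geq k+1$. Because $c_t$ is monotone with supremum $n$, one has $f_q(k)\!\downarrow$ iff $k+1 \leq n$, so $domain(f_q)$ is \emph{exactly} $\{0,1,\dots,n-1\}$ and $card^{\nat}(f_q) = n$. The map $q \mapsto f_q$ is uniformly effective, so by the strong universality of $U^{\cal E}$ there is a fixed index $\tte$ with $U^{\cal E}(\couple{\tte}{q}) = f_q$; using the bound $|\couple{\tte}{q}| = |q| + 2|\tte| + 1$ of Theorem \ref{thm:universal1} we obtain $K_{card}^{\nat}(n) \leq |q| + 2|\tte| + 1 = K^{\infty}(n) + O(1)$.

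The step requiring care is this last construction: the whole point is that $domain(f_q)$ has cardinality \emph{exactly} $n$, and this relies crucially on $c_t$ being monotone, so that ``the count eventually reaches $k+1$'' is equivalent to ``$k < n$''. Were the infinite computation allowed to rewrite its output and approach $n$ non-monotonically, the set $\{k : c_t \geq k+1 \text{ for some } t\}$ would have cardinality $\limsup_t c_t$ and could overshoot $n$; it is precisely the enumeration (monotone) nature of unary infinite-computation output, matching the monotone growth of a recursively enumerable domain, that keeps the count exact. The only other point to watch is the uniform $O(1)$ length overhead, which is handled as in Theorem \ref{thm:prtype1} through the strongly universal enumerations together with Theorem \ref{thm:universal1}.
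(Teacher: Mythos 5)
Your proof is correct and follows essentially the same route as the paper: both directions are obtained by an effective program translation $h$ (dovetailing the domain enumeration to emit unary tallies in one direction, and converting the monotone unary output into a finite r.e.\ set of size exactly $n$ in the other), with the $O(1)$ overhead absorbed by strong universality. The only cosmetic difference is that in the second direction the paper takes the domain to be the set of \emph{time steps} at which $q$ emits a tally, whereas you take $\{k : c_t \geq k+1 \text{ for some } t\}$; both sets have exactly $n$ elements for the same monotonicity reason you identify.
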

\begin{proof}$\\$
1) ${\cal E}=PR(\nat \to \nat)$.
Let $n\in\nat$ and let $p\in\prog$ be a minimal length program
which outputs $n$ in unary through a possibly infnite computation.
Then $K^{\infty}(n) = |p|$. 
Define $h:\prog\to\prog$ such that $h(p)$ is the following program
for a partial recursive function $\varphi_{h(p)}:\nat\to\nat$ :\\
$$\varphi_{h(p)}(t) = \mbox{{\tt IF} at step } t 
                     \mbox{ program } p \mbox{ outputs } 1
                     \mbox{ {\tt THEN} } 1
                     \mbox{ {\tt ELSE} undefined}$$
Clearly, $card^{\nat}(U^{\cal E}(h(p)) = n$.
The strong universality of $U^{\cal E}$ insures that 
there exists ${\tt e}$ such that 
$$\forall p\ \ U^{\cal E}(h(p))
             = U^{\cal E}(\langle {\tt e}, p\rangle)$$ 
This leads to $K^{\nat}_{card}(n) \leq K^{\infty}(n)+2|{\tt e}|+1$,
whence $K^{\nat}_{card} \leqct K^{\infty}$.
\medskip\\
2) Let $n\in\nat$ and let $p\in\prog$ be a minimal length program
such that $card^{\nat}(U^{\cal E}(p)) = n$.
Define $h:\prog\to\prog$ such that $h(p)$ behaves as follows:
\begin{itemize}
\item  $h(p)$ emulates some dovetailing of all computations 
       $U^{\cal E}(p)(i)$ for $i=0,1,2,\ldots$,
\item  each time some computation $U^{\cal E}(p)(i)$ halts
       (i.e. a new point $i$ is proved to be in the domain of
       $U^{\cal E}(p)$) then $h(p)$ output $1$.
\end{itemize}
It is clear that $h(p)$ outputs $n$ in unary.
Thus, $K^{\infty}(n) \leq |h(p)|$. But there is some constant $c$
such that $|h(p)|= |p|+c$, whence $K^{\infty}(n) \leq |p|+c$ and
therefore $K^{\infty} \leqct K^{\nat}_{card}$.	 
\end{proof}
%
\subsection{$K^{\Znat}_{card}\segment\nat$ and $K'$}
%
%
\begin{theorem}$\\$
\centerline{$K^{\Znat}_{card}\segment\nat \eqct K'$}
\end{theorem}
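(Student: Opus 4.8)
The plan is to establish the two inequalities $K^{\Znat}_{card}\segment\nat \leqct K'$ and $K' \leqct K^{\Znat}_{card}\segment\nat$ separately, following the pattern of the proof of Theorem \ref{thm:Kcard} but with the jump oracle $\emptyset'$ playing the role that infinite computations played for $K^{\infty}$. The essential new ingredient is the interplay between the Shoenfield limit lemma and the device of representing an integer as a \emph{net} difference of cardinalities $card(domain(f)) - card(domain(g))$.

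For $K^{\Znat}_{card}\segment\nat \leqct K'$, fix $n\geq 0$ and let $p$ be a minimal-length $\emptyset'$-program halting with output $n$, so $K'(n)=|p|$. From $p$ I build, \emph{uniformly and without oracle}, a total recursive approximation $a:\nat\to\Znat$: at stage $s$ run the main computation of $p$ for at most $s$ steps, answering each query ``does $\tte$ halt?'' according to whether $\tte$ halts within $s$ steps, and let $a(s)$ be the resulting output (and $0$ if no halt occurs by the timeout). Since the true computation halts and issues only finitely many queries, the approximation \emph{stabilizes}: there is $s_0$ with $a(s)=n$ for all $s\geq s_0$. I then encode the net variation of $a$ as a pair $(f,g)$ of partial recursive functions: scanning $s=0,1,2,\ldots$ (with the convention $a(-1)=0$), whenever $a$ increases I enumerate $a(s)-a(s-1)$ fresh points into $domain(f)$, and whenever it decreases I enumerate $a(s-1)-a(s)$ fresh points into $domain(g)$. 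The telescoping sum gives $card(domain(f))-card(domain(g))=\lim_s a(s)=n$; and \textbf{the crucial point is that both domains are finite}, precisely because stabilization after $s_0$ means only finitely many points are ever enumerated, so $(f,g)$ genuinely lies in the domain of $card^{\Znat}$ and $card^{\Znat}(f,g)=n$. Since $(f,g)$ is produced from $p$ by a fixed recursive transformation $h$, strong universality of $U^{\cal E}$ provides $\tte$ with $U^{\cal E}(\couple{\tte}{p})$ equal to this pair, whence $K^{\Znat}_{card}(n)\leq|\couple{\tte}{p}|=|p|+2|\tte|+1=K'(n)+O(1)$.

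For the reverse inequality $K' \leqct K^{\Znat}_{card}\segment\nat$, fix $n\geq 0$ and let $p$ be a minimal-length program with $card^{\Znat}(U^{\cal E}(p))=n$, so $U^{\cal E}(p)=(f,g)$ is a pair with finite domains and $card(domain(f))-card(domain(g))=n$. The observation is that $\emptyset'$ \emph{computes the exact cardinality of a finite r.e.\ set}: for each $k$ the relation $card(domain(f))\geq k$ is uniformly $\Sigma^0_1$, hence decided by the oracle, so searching upward for the largest such $k$ terminates (finiteness being guaranteed since $p$ is in the domain of the representation) and returns $card(domain(f))$, and similarly for $g$. I therefore design a fixed $\emptyset'$-program which, given $p$ as a suffix, extracts $(f,g)=U^{\cal E}(p)$, computes the two cardinalities using the oracle, and outputs their difference $n$. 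This program has length $|p|+O(1)$, so Kolmogorov's invariance theorem relativized to $\emptyset'$ yields $K'(n)\leq K^{\Znat}_{card}(n)+O(1)$. Together the two bounds give $K^{\Znat}_{card}\segment\nat \eqct K'$.

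I expect the delicate step to be the first inequality. One must verify that the approximation extracted from the oracle program is genuinely total (achieved by timing out both the query simulations and the main computation) and, more importantly, that it is the \emph{stabilization} of $a$, rather than mere convergence in some weaker sense, that forces $domain(f)$ and $domain(g)$ to be finite; this finiteness is exactly what is needed for the pair to belong to the domain of $card^{\Znat}$. The second inequality is essentially routine once one notes that $card(domain(f))\geq k$ is uniformly $\Sigma^0_1$ and therefore $\emptyset'$-decidable.
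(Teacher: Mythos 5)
Your proof is correct, and both directions rest on the same two ideas as the paper's: the oracle $\emptyset'$ can pin down the exact (finite) cardinalities of the two r.e.\ domains, and conversely the second coordinate of the $\Znat$-cardinal pair can absorb the retractions made by a recursive approximation of an oracle computation. The only real difference is in how you package the harder direction $K^{\Znat}_{card}\segment\nat \leqct K'$: the paper runs Chaitin's ``harmless overshoot'' emulation directly (answer ``NO'' to every $\exists$-query, verify each answer by a search loop, and on each detected error restart the emulation while dumping compensating points into $domain(g)$), whereas you factor the argument through the Shoenfield limit lemma, obtaining a total recursive stabilizing approximation $a(s)\to n$ and then encoding its positive and negative variation into $domain(f)$ and $domain(g)$ so that the telescoping sum yields $n$. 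The two constructions are essentially isomorphic --- the overshoot emulation is one proof of the limit lemma --- but your decomposition is more modular and makes the crucial finiteness of the two domains an immediate consequence of stabilization, rather than something to be read off from the bookkeeping of restarts; the paper's version, on the other hand, is self-contained and makes the compensation mechanism explicit at the level of the emulation. Your reverse direction (searching with the oracle for the largest $k$ with $card(domain(f))\geq k$, a uniformly $\Sigma^0_1$ predicate) is the same as the paper's oracle query ``is there still some computation that will halt?'', just phrased as computing the cardinalities outright rather than detecting when the dovetailing has saturated.
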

\begin{proof}$\\$
Now ${\cal E}=(PR(\nat \to \nat))^2$ and $U^{\cal E}$ is a pair
of functions $(U^{\cal E}_1, U^{\cal E}_2)$.
\\
1) Let $n\in\nat$ and let $p\in\prog$ be a minimal length program
such that 
$card^{\Znat}(U^{\cal E}(p)) 
= card^{\nat}(U^{\cal E}_1(p)) - card^{\nat}(U^{\cal E}_2(p)) = n$.
Define $h:\prog\to\prog$ such that $h(p)$ is a program which 
uses oracle $\emptyset'$ and behaves as follows:
\begin{itemize}
\item  $h(p)$ emulates some dovetailing of all computations 
       $U^{\cal E}_1(p)(i), U^{\cal E}_2(p)$ for $i=0,1,2,\ldots$,
\item  At each computation step, $h(p)$ asks the oracle whether 
       there is still some computation that will halt.
       If the answer is ``NO" then $h(p)$ halts and outputs
       the difference of the number of points $i$'s on which
       $U^{\cal E}_1(p)$ has been checked to converge and that for
       $U^{\cal E}_2(p)$
\end{itemize}
It is clear that $h(p)$ outputs $n$. 
Thus, $K'(n) \leq |h(p)|$. But there is some constant $c$
such that $|h(p)|= |p|+c$, whence $K'(n) \leq |p|+c$ and
therefore $K' \leqct K^{\Znat}_{card}\segment\nat$.
\medskip\\
2) Let $n\in\nat$ and let $p\in\prog$ be a minimal length program
using oracle $\emptyset'$ which outputs $n$ in unary. 
Then $K'(n) = |p|$. 
To emulate computations using oracle $\emptyset'$, we shall use
Chaitin's harmless overshoot technique \cite{chaitin76}:
\begin{itemize}
\item  If an $\exists x\ldots$ assertion is true then a computation
       loop will check that it is true.
\item  However, if it is false, there is no way to check it in finite 
       time.
\item  Whence the strategy to systematically answer ``NO" to each 
       $\exists x\ldots$ question and then check this answer via 
       a computation loop.
\item  Every false answer ``NO" will be proved false at some time
       during this loop, giving a possibility to correct it.
\item  Every computation using oracle $\emptyset'$ which halts uses
       finitely many questions to the oracle. The above strategy 
       will therefore be corrected only finitely many times so that
       it eventually leads for a fair emulation.
\end{itemize}
Define $h:\prog\to\prog$ such that 
$U^{\cal E}_1(h(p))$ and $U^{\cal E}_2(h(p))$ behave as follows:
\begin{enumerate}
\item  $U^{\cal E}_1(h(p))$ emulates $p$ and whenever $p$ outputs 
       $1$ then $U^{\cal E}_1(h(p))$ will converge on $t$ where $t$ 
       is the current computation step.  
\item  Each time $p$ asks a question to oracle $\emptyset'$ of the 
       form $\exists x\ldots \mbox{ ?}$ then 
       $U^{\cal E}_1(h(p))$ answers ``NO". 
\item  Cautiously, $U^{\cal E}_1(h(p))$ checks each of its oracle 
       answers starting a computation loop which will halt only if 
       the right answer was ``YES" (instead of ``NO").
\item  In case some answer was false, then $U^{\cal E}_1(h(p))$
       restarts the whole emulation of $p$ (correcting its past
       answers) and $U^{\cal E}_2(h(p))$ will converge on a set of 
       points in bijection with that on which $U^{\cal E}_1(h(p))$ 
       was made converging before the answer was recognized to be 
       false. 
\end{enumerate}
Corrections brought in point 4 make the final difference 
$$card^{\nat}(U^{\cal E}_1(h(p))) - card^{\nat}(U^{\cal E}_2(h(p)))$$
equal to $n$.\\
The strong universality of $U^{\cal E}$ insures that 
there exists ${\tt e}$ such that 
$$\forall p\ (U^{\cal E}_1(h(p))
             = U^{\cal E}(\langle {\tt e_1}, p\rangle)\
\wedge\ U^{\cal E}_2(h(p))
             = U^{\cal E}(\langle {\tt e_2}, p\rangle))$$ 
This leads to $K^{\Znat}_{card}(n) \leq K'(n)+2|{\tt e}|+1$,
whence $K^{\Znat}_{card}\segment\nat \leqct K'$.
\end{proof}
%
\subsection{$K_{ord}$ and ${K'}^{\infty}$}
%
\begin{theorem} $\\$
    \centerline{$K_{ord} \eqct {K'}^{\infty}$}
\end{theorem}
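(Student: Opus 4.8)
The plan is to establish the two inequalities ${K'}^{\infty} \leqct K_{ord}$ and $K_{ord} \leqct {K'}^{\infty}$ separately, combining the infinite-computation idea used for $K^{\nat}_{card} \eqct K^{\infty}$ with the oracle-emulation (Chaitin overshoot) idea used for $K^{\Znat}_{card}\segment\nat \eqct K'$. Throughout I would take ${\cal E} = PR(\nat^2\to\nat)$ and, for $f = U^{\cal E}(p)$, write $R = domain(f)$ viewed as a binary relation on $\nat$, $R^*$ for its transitive closure, and $x \sim y$ for the conjunction $x R^* y \wedge y R^* x$; recall that $ord^{\nat}(f) = n$ means the $\sim$-classes meeting the field of $R$ form a chain of exactly $n$ elements, while the field itself may well be infinite.

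For the easy direction ${K'}^{\infty} \leqct K_{ord}$, let $p$ be a minimal program with $ord^{\nat}(U^{\cal E}(p)) = n$. I would let $h(p)$ be the $\emptyset'$-oracle program running the non-halting loop: for $x = 0,1,2,\ldots$, emit a single $1$ precisely when $x$ lies in the field of $R$ and $x$ is the least element of its $\sim$-class. The predicate ``$x$ in field'' is $\Sigma^0_1$, and ``$x$ least in its class'', i.e. $\forall y < x\ \neg(x\sim y)$, is $\Pi^0_1$ (each $x R^* y$ being $\Sigma^0_1$), so the oracle $\emptyset'$ decides in finitely many steps whether to emit for each $x$. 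Since the least elements are in bijection with the classes, exactly $n$ ones are emitted, monotonically, so $h(p)$ is a non-halting $\emptyset'$-computation outputting $n$ in unary. As $|h(p)| = |p| + O(1)$, strong universality of $U^{\cal E}$ supplies ${\tt e}$ with $U^{\cal E}(\langle {\tt e}, p\rangle) = U^{\cal E}(h(p))$, whence ${K'}^{\infty}(n) \le |p| + 2|{\tt e}| + 1 = K_{ord}(n) + O(1)$.

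For the hard direction $K_{ord} \leqct {K'}^{\infty}$, let $p$ be a minimal non-halting $\emptyset'$-oracle program outputting $n$ in unary. I would build $f = U^{\cal E}(h(p))$ by enumerating edges into $R$ while dovetailing an emulation of $p$ in which every oracle query ``$\exists x\,\phi(x)$?'' is first answered ``NO'' and simultaneously checked by a search loop (Chaitin's harmless overshoot). I would maintain a growing finite chain $a_0 \to a_1 \to \cdots \to a_k$ of fresh points (edges placed in $R$), whose $\sim$-classes under $R^*$ form a linear order of length $k+1$. Each time the emulation emits a $1$, I append a fresh top element, raising the order type by one. Each time a guessed ``NO'' is found wrong, I restart the emulation from that query with answer ``YES'' and cancel every element created after it: since $ord^{\nat}$ counts $\sim$-classes, I merely add the edge from the current top back to the last valid element $a_i$, creating one large cycle that collapses all the spurious elements into the class of $a_i$ while leaving the count below $a_i$ untouched.

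The main obstacle is the correctness of this second direction, which is more delicate than the $K'$ case because $p$ is an \emph{infinite} computation and may issue infinitely many oracle queries, hence suffer infinitely many corrections. The points I would need to verify are: (i) every wrong guess (true answer ``YES'') is eventually detected, as its witness is eventually found by the search loop; (ii) the true run emits its last $1$ at some finite step $s^*$, involving only finitely many queries, so after finitely many corrections the emulation agrees with the true run through $s^*$ and has produced exactly $n$ valid, never-collapsed elements; and (iii) any $1$ produced after the emulation diverges from the true run sits strictly above all $n$ valid elements, and is therefore absorbed into an existing class when its erroneous query is corrected, contributing no surviving class. Granting (i)--(iii), the limit relation $R$ is partial recursive, its field is possibly infinite but its $\sim$-quotient is a chain of exactly $n$ classes, so $ord^{\nat}(f) = n$; with $|h(p)| = |p| + O(1)$ and strong universality this yields $K_{ord}(n) \le {K'}^{\infty}(n) + O(1)$. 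The two inequalities together give $K_{ord} \eqct {K'}^{\infty}$.
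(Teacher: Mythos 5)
Your proof follows the paper's argument essentially step for step: the easy direction enumerates the $\sim$-classes by a non-halting $\emptyset'$-computation (the paper selects, at stage $t$, those $t$ forming a chain with the previously kept points; you select the least element of each class --- same $\Sigma^0_1/\Pi^0_1$ oracle questions, same count), and the hard direction uses Chaitin's harmless overshoot to build the domain of a function in $PR(\nat^2\to\nat)$, absorbing rolled-back points into an existing class by closing a cycle. The only real deviation is the rollback bookkeeping: the paper collapses the \emph{entire} current set $X$ into one class and reinitializes $X$ to $\{0\}$ at every correction, whereas you collapse only the suffix created after the faulty query into the class of the last valid element; your variant is the more robust one, since a non-halting $\emptyset'$-computation can trigger infinitely many corrections (precisely the subtlety you isolate in (i)--(iii)), a case in which the paper's wholesale-collapse version would merge everything into a single class.
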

\begin{proof}$\\$
Now ${\cal E}=PR(\nat^2 \to \nat)$.
\\
1) Let $n\in\nat$ and let $p\in\prog$ be a minimal length program
such that $ord(U^{\cal E}(p)) = n$.
Define $h:\prog\to\prog$ such that $h(p)$ uses oracle 
$\emptyset'$ and behaves as follows:
\begin{itemize}
\item  $h(p)$ initializes a set $X$ of integers to $\emptyset$.
\item  {\bf Step $0$.} $h(p)$ checks if there is some pair with $0$ 
       as a component which is in the domain of $U^{\cal E}(p)$.
       If so it outputs a $1$ and puts $0$ in the set $X$.
\item  {\bf Step $t>0$.} $h(p)$ checks if there is some chain 
       (constituted of pairs in the domain of $U^{\cal E}(p)$) 
       containing $t$ and all elements of $X$.
       If so it outputs a $1$ and puts $t$ in the set $X$.
\end{itemize}
It is clear that, through an infinite computation, $h(p)$ outputs
the unary representation of the order type of the transitive closure
of the domain of $U^{\cal E}(p)$ in case it is a finite ordered type.
Thus, ${K'}^{\infty}(n) \leq |h(p)|$. 
But there is some constant $c$ such that $|h(p)|= |p|+c$, 
whence ${K'}^{\infty}(n) \leq |p|+c$ and
therefore ${K'}^{\infty} \leqct K_{ord}$.
\medskip\\
2) Let $n\in\nat$, $n>0$ and let $p\in\prog$ be a minimal length 
program using oracle $\emptyset'$ which outputs $n$ in unary 
through an infinite computation. Then ${K'}^{\infty}(n) = |p|$. 
We shall again use Chaitin's harmless overshoot technique.
Define $h:\prog\to\prog$ such that $U^{\cal E}(h(p))$ behaves 
as follows:
\begin{enumerate}
\item  $U^{\cal E}(h(p))$ initializes a set $X$ of integers to 
       $\{0\}$. This set $X$ will always be finite.
\item  $U^{\cal E}(h(p))$ emulates $p$ and whenever $p$ outputs 
       $1$ then a new point $k$ is added to $X$ and 
       $U^{\cal E}(h(p))$ will converge on every pair $(x,k)$ 
       where $x\in X$. 
\item  Each time $p$ asks a question to oracle $\emptyset'$ of the 
       form $\exists x\ldots \mbox{ ?}$ then 
       $U^{\cal E}(h(p))$ answers ``NO". 
\item  Cautiously, $U^{\cal E}(h(p))$ checks each one of its oracle 
       answers starting computation loops which will halt only if 
       some right answer was ``YES" (instead of ``NO").
\item  In case some answer was false, then $U^{\cal E}(h(p))$
       restarts the whole emulation of $p$ (correcting its past
       answers) and $U^{\cal E}(h(p))$ will converge on all
       pairs $(x,y)\in X^2$ and the set $X$ is reinitialized to 
       $\{0\}$. 
\end{enumerate}
Corrections brought in point 5 make the final transitive closure of
the domain of $U^{\cal E}(h(p))$ a preordered set the quotient of 
which has exactly $n$ points.\\
The strong universality of $U^{\cal E}$ insures that 
there exists ${\tt e}$ such that 
$$\forall p\ (U^{\cal E}(h(p))
             = U^{\cal E}(\langle {\tt e}, p\rangle)
$$ 
This leads to $K_{ord}(n) \leq {K'}^{\infty}(n)+2|{\tt e}|+1$,
whence $K_{ord} \leqct {K'}^{\infty}$.
\end{proof}


\begin{thebibliography}{10}

\bibitem{alpha}
V.~Becher, S.~Daicz, and G.~Chaitin.
\newblock A highly random number.
\newblock In {\em Combinatorics, Computability and Logic: Proceedings of the
  Third Discrete Mathematics and Theoretical Computer Science Conference
  (DMTCS'01)}, pages 55--68. Springer-Verlag, 2001.

\bibitem{chaitin75}
G.~Chaitin.
\newblock A theory of program size formally identical to information theory.
\newblock {\em Journal of the ACM}, 22:329--340, 1975.
\newblock Available on Chaitin's home page.

\bibitem{chaitin76}
G.~Chaitin.
\newblock Information theoretic characterizations of infinite strings.
\newblock {\em Theoret. Comput. Sci.}, 2:45--48, 1976.
\newblock Available on Chaitin's home page.

\bibitem{ferbusgrigo}
M.~Ferbus-Zanda and S.~Grigorieff.
\newblock Is randomnes native to computer science?
\newblock {\em Bull. EATCS}, 74:78--118, 2001.
\newblock Available on the author's home page.

\bibitem{kolmo65}
A.N. Kolmogorov.
\newblock Three approaches to the quantitative definition of information.
\newblock {\em Problems Inform. Transmission}, 1(1):1--7, 1965.

\bibitem{levin73}
L.~Levin.
\newblock On the notion of random sequence.
\newblock {\em Soviet Math. Dokl.}, 14(5):1413--1416, 1973.

\bibitem{li-vitanyi}
M.~Li and P.~Vitanyi.
\newblock {\em An introduction to Kolmogorov complexity and its applications}.
\newblock Springer, 1997 (2d edition).

\bibitem{odifreddi}
P.~Odifreddi.
\newblock {\em Classical Recursion Theory}, volume 125.
\newblock North-Holland, 1989.

\bibitem{rogers}
H.~Rogers.
\newblock {\em Theory of recursive functions and effective computability}.
\newblock McGraw-Hill, 1967.

\bibitem{shen}
A.~Shen.
\newblock Kolmogorov complexity and its applications.
\newblock {\em Lecture Notes, Uppsala University, Sweden}, pages 1--23, 2000.
\newblock http://www.csd.uu.se/\~{}vorobyov/ Courses/KC/2000/all.ps.

\bibitem{solovay77}
R.M. Solovay.
\newblock On random {R.E.} sets.
\newblock In A.I. Arruda and al., editors, {\em Non-classical {L}ogics, {M}odel
  theory and {Computability}}, pages 283--307. North-H{o}lland, 1977.

\end{thebibliography}
\end{document}